\newcommand{\amsprimary}[1]{{\footnotesize\noindent AMS 2010 \textit{Mathematics subject
classification:} Primary #1\vspace{1pc}}}
\newcommand{\keywordsnames}[1]{{\footnotesize\noindent\textit{Key words:} #1\vspace{1pc}}}
\newtheorem{theorem}{Theorem}[section]
\newtheorem{theorem*}{Theorem}
\newtheorem{lemma}{Lemma}[section]
\newtheorem{corollary}{Corollary}[section]
\title{The Ricci flow on a cylinder}
\begin{document}
\author{
Jean C. Cortissoz
\and
Alexander Murcia
}
\address{Departamento de Matem\'aticas, Universidad de los Andes, Bogot\'a DC, COLOMBIA.}
\date{}
\begin{abstract}
In this paper we study the Ricci flow on surfaces homeomorphic  to a cylinder (that is, a product
of the circle with a compact interval).
We prove longtime existence results, results on the asymptotic behavior
of the flow, and we report on an interesting phenomenon: convergence
to constant curvature in the normalised flow,under certain assumptions on the initial data, cannot be exponential 
\end{abstract}
\maketitle
{\keywordsnames { Ricci flow; blow-up; convergence.}}

{\amsprimary {54C44; 35K55.}}

\section{Introduction}

There is no need to talk about the importance of the Ricci flow (on closed and noncompact manifolds without boundary) 
in geometric analysis and low dimensional topology. Moreover, besides its numerous applications in geometric analysis,
another of the great merits of studying the Ricci flow is that it gives a way of understanding the
behavior of certain nonlinear parabolic equations using geometric insights. 
The subject of this paper is the study of the following boundary value problem for the Ricci flow on a 
surface with boundary
\begin{equation}
\label{Ricciunnormalised}
\left\{
\begin{array}{l}
\displaystyle\frac{\partial \tilde{g}}{\partial \tilde{t}}=-\tilde{R}\tilde{g} \quad \mbox{in}\quad M\times\left(0,\tilde{T}\right)\\
k_{\tilde{g}}=\gamma \quad \mbox{on}\quad \partial M\times\left(0,\tilde{T}\right)\\
\tilde{g}=g_0 \quad \mbox{in}\quad M,
\end{array}
\right.
\end{equation}
where $\gamma=k_{g_0}$ is the geodesic curvature of the initial metric, and $M$ is 
a surface homeomorphic to a cylinder $\mathbb{S}^1\times\left[-\rho,\rho\right]$, and the corresponding boundary
value problem for
its normalised version. Recall that the normalised flow is obtained from (\ref{Ricciunnormalised}) by the following
procedure.
Let $\phi\left(\tilde{t}\right)$ be a function such that $g=\phi \tilde{g}$ so that the area $A_g\left(M\right)$  of the surface $M$ with respect to the 
rescaled metric $g$ is kept equal to 1 at all times. Then the
time parameter is rescaled by setting
\[
t\left(\tilde{t}\right)=\int_0^{\tilde{t}}\phi\left(\tau\right)\,d\tau.
\]
By this procedure we obtain the following boundary value problem:
\begin{equation}
\label{Riccinormalised}
\left\{
\begin{array}{l}
\dfrac{\partial g}{\partial t}=\left(r-R\right)g \quad \mbox{in}\quad M\times\left(0,T\right)\\
k_g=\dfrac{\gamma}{\sqrt{\phi\left(t\right)}}  \quad \mbox{on}\quad \partial M\times\left(0,\tilde{T}\right)\\
g=g_0 \quad \mbox{in}\quad M,
\end{array}
\right.
\end{equation}
where 
\[
r=\int R\,dA_g,
\]
is the average scalar curvature.
From now on, all quantities referring to the unnormalised flow will have a tilde, whereas those referring to the normalised flow will not.

The existence theory for the unnormalised flow, and hence that of the normalised flow, is well known and we refer to \cite{CortissozMurcia} for a short discussion
on the matter. Hence, 
in this work, we shall be concerned with the asymptotic behavior of both versions of the flow, under certain geometric hypotheses, and what prompted us
to write this note is how curious this behavior seems to be. We can even say that we have been rewarded
by finding out some unexpected (at least to us) results. We refer to some of our results as
 unexpected, because we were 
guided by the following principle: the behavior of the flow in surfaces 
with boundary must parallel that of the flow in closed surfaces (and the results of Brendle in \cite{Brendle} seems to support it).
This principle turned out to be wrong.

Indeed, under the assumption that $R_{g_0}>0$ and $k_{g_0}\leq 0$, we shall prove 
 long time existence results for both the normalised and unnormalised flow, but we will show
that whereas in the normalised flow the curvature remains uniformly bounded 
(at least in a set of examples we consider, see Section \ref{moresymmetries}), in the unnormalised
flow it blows up in infinite time; 
this already marks a difference between the case of a cylinder and the corresponding case of closed surface
of zero Euler characteristic, where both the normalised and unnormalised flow are the same. This, of course,
has to do with the fact that $R_{g_0}>0$ is not a possibility in the case of closed surfaces of Euler charachteristic zero.

To continue with the interesting and surprising behavior
we encountered, we show that even though we expect the curvature to converge towards zero, it
does not do it exponentially, contrary to what does happen in the case of closed surfaces. The authors
think both result are interesting enough to merit reporting them; besides
the proofs are elementary, and this may be of interest to the non specialist, and
to those also interested on the behavior of nonlinear parabolic equations.

After trying to entice the reader with the comments made above, let us be more explicit
regarding the results we shall prove in this paper. We begin with our
longtime existence result.

\begin{theorem*}
Assume that the initial metric has the form
\[
g_0=dr^2+f\left(r\right)^2d\theta^2,
\]
and that $R_{g_0}\geq 0$ and $k_{g_0}\leq 0$. Then the normalised and the unnormalised flow exist for all time.
\end{theorem*}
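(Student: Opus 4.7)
My plan is to exploit the rotational symmetry of the initial metric to reduce the Ricci flow to a one-dimensional quasilinear parabolic boundary value problem, and then use maximum principles together with Gauss--Bonnet to derive the a priori bounds required for long-time existence.

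\emph{Reduction and sign preservation of $\tilde R$.} By uniqueness of the short-time solution (established in \cite{CortissozMurcia}), rotational symmetry is preserved, so I would write $\tilde g(\tilde t) = e^{2u(r,\tilde t)}(dr^2 + f(r)^2\, d\theta^2)$. The flow then becomes the quasilinear equation $\partial_{\tilde t} u = e^{-2u}(\Delta_{g_0} u - R_0/2)$ on $[-\rho,\rho]\times[0,\tilde T)$, with the prescribed-geodesic-curvature condition translating into the Robin-type condition $\partial_r u = \gamma(e^u - 1)$ at $r = \pm\rho$ (signs adjusted by orientation of the outward normal) and initial data $u \equiv 0$. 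Differentiating $\tilde k = \gamma$ in time together with the conformal formula for boundary curvature yields the boundary identity $\partial_{\tilde\nu} \tilde R = \gamma \tilde R$; combined with the standard evolution $\partial_{\tilde t}\tilde R = \Delta_{\tilde g}\tilde R + \tilde R^2$, the parabolic maximum principle with Hopf's lemma---whose conclusion at a negative boundary minimum is obstructed by $\gamma \le 0$---yields $\tilde R \ge 0$ throughout the time of existence. Consequently $\partial_{\tilde t} u = -\tilde R/2 \le 0$, so $u \le 0$ and the conformal factor is bounded above by $1$.

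\emph{Finite-time bounds on $\tilde R$.} This is the main obstacle. Gauss--Bonnet gives the natural identity $\int_M \tilde R\, dA_{\tilde g} = -2\int_{\partial M}\gamma\, ds_{\tilde g}$, and because the boundary lengths $L_\pm(\tilde t) = 2\pi f(\pm\rho) e^{u(\pm\rho,\tilde t)}$ are non-increasing (their evolution is $\partial_{\tilde t} L_\pm = -\tilde R(\pm\rho,\tilde t) L_\pm/2 \le 0$), this produces a uniform $L^1$ bound $\int_M\tilde R\,dA \le C_0$ depending only on $g_0$. To promote this to a pointwise bound---and so to rule out finite-time blow-up, which the quadratic nonlinearity in the evolution equation a priori permits---I would work at the level of $u$. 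Hopf's lemma applied to $u$, using that $\partial_r u = \gamma(e^u - 1)$ has the right sign at the boundary when $u \le 0$, excludes boundary minima of $u$; an interior minimum then gives the ODE comparison $(e^{2u_{\min}})' \ge -R_{0,\max}$ and hence a finite lower bound for $u$ on short intervals. On each such interval the equation for $u$ is uniformly parabolic, so Schauder-type estimates supply $C^{2,\alpha}$ control of $u$ up to the boundary (the Robin-type boundary condition being compatible), whence $\tilde R = e^{-2u}(R_0 - 2\Delta_{g_0} u)$ is pointwise bounded. Iterating across successive short intervals propagates the bound to any finite $T$.

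\emph{Conclusion and normalised flow.} Pointwise control of $\tilde R$ on each $[0,T]$ together with the standard parabolic continuation criterion yields long-time existence of the unnormalised flow. For the normalised flow, the reparameterisation $t(\tilde t) = \int_0^{\tilde t}\phi(\tau)\, d\tau$ with $\phi = 1/A_{\tilde g}$ sends $[0,\infty)$ onto $[0,\infty)$: since $A_{\tilde g}$ is non-increasing (because $\tilde R \ge 0$), one has $\phi \ge 1/A_{\tilde g}(0)$, so $t(\tilde t) \to \infty$ as $\tilde t \to \infty$. Hence the normalised flow also exists for all $t \in [0,\infty)$.
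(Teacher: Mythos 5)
The reduction to the conformal factor, the preservation of $\tilde R\ge 0$, the monotonicity $u\le 0$, the Gauss--Bonnet $L^1$ bound, and the final passage from the unnormalised to the normalised flow are all fine. The gap is precisely at the step you yourself call the main obstacle: promoting the $L^1$ bound to a pointwise bound on $[0,T]$. Your interior-minimum comparison gives $\frac{d}{dt}e^{2u_{\min}}\ge -R_{0,\max}$, hence $e^{2u_{\min}(t)}\ge 1-R_{0,\max}\,t$, which becomes vacuous at $t=1/R_{0,\max}$; and the proposed iteration over successive short intervals does not repair this, because restarting at a time where $e^{2u_{\min}}=\delta$ only buys further time of order $\delta/R_{0,\max}$, and these intervals may sum to a finite total. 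That is exactly the finite-time collapse scenario the theorem must exclude, not an argument that it cannot occur: a lower bound for $u$ on every finite interval is essentially equivalent to the statement being proved, and a local-in-time maximum principle with only $R_{g_0}\ge 0$, $k_{g_0}\le 0$ as input cannot deliver it. Note also that your pointwise step never uses the $L^1$ bound or the cylinder topology at all; the same reduction, boundary sign analysis, ODE comparison and Schauder iteration would apply verbatim to the rotationally symmetric disk with $R_{g_0}>0$, $k_{g_0}\le 0$, where the unnormalised flow is known to become singular in finite time (Proposition 2.3 of \cite{CortissozMurcia}). So an argument of this shape cannot be correct as stated.

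For comparison, the paper's proof runs in the opposite order and leans on the normalisation in an essential way: it bounds the curvature of the \emph{normalised} flow first, by introducing the potential $f$ solving $\Delta_g f=R-r$ with Neumann condition and the correction term $\psi$ with $\partial\psi/\partial\eta_g=-k_gR$, proving that $\int_0^T r\,dt$, $\int_0^T\overline R\,dt$ and $\int_0^T\|\nabla\psi\|_\infty\,dt$ are finite --- the bound on the boundary curvature integral (Lemma \ref{infiniteintegral}) uses crucially that the normalised flow keeps the area equal to $1$, via the area comparison Lemma \ref{area1} --- and then applying the maximum principle to $h=\Delta_g f+|\nabla f|^2$, whose boundary derivative $k_gR\le 0$ has a favourable sign. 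Long-time existence of the unnormalised flow is then deduced from that of the normalised flow (Corollary \ref{longtimeunnormalised}). If you wish to keep your unnormalised-first strategy, you must supply a mechanism that converts $\int_M\tilde R\,dA_{\tilde g}\le C$ into pointwise control on finite time intervals (some Harnack- or potential-type estimate); without it the argument is incomplete at its central step.
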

We show the previous result for the normalised flow for initial data of the form $g_0=dr^2+f\left(r\right)^2d\theta^2$
 in Section \ref{longtimeexistence}, and in Section \ref{asymptoticbehavior} we show that whenever the normalised
flow exists for all time so does the unnormalised flow (no symmetry assumptions are required to prove this claim).
This result should be compared with Proposition 2.3 in \cite{CortissozMurcia}, where it is shown, in the case of a disk, 
that under the conditions $R_{g_0}>0$ and $k_{g_0}\leq 0$, the unnormalised flow becomes singular in finite time.

Our main result regarding asymptotic behavior, which does not require any symmetry hypothesis, is the following
theorem, whose proof is also given in Section \ref{asymptoticbehavior}.
\begin{theorem*}
The total scalar curvature for the normalised flow, under the assumption that $k_{g_0}\leq 0$, satisfies
\[
\int_M R\,dA_g\leq \frac{1}{\log\left(1+t\right)}.
\]
\end{theorem*}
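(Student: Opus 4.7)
The plan is to derive an evolution equation for the total scalar curvature $r(t):=\int_M R\, dA_g$ and then extract the logarithmic decay by ODE comparison. A preliminary observation is that Gauss--Bonnet on the cylinder ($\chi(M)=0$) gives
\[
r(t) \;=\; -2\int_{\partial M}k_g\, ds_g,
\]
and since the boundary condition forces $k_g = \gamma/\sqrt{\phi(t)}$ with $\gamma = k_{g_0}\le 0$ and $\phi>0$, we have $k_g\le 0$ for all time, and hence $r(t)\ge 0$ throughout the evolution.

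Next I would compute $dr/dt$. Using the standard evolution $\partial_t R = \Delta R + R(R - r)$ together with $\partial_t(dA_g) = (r-R)\, dA_g$, the pointwise terms cancel and the divergence theorem leaves
\[
\frac{dr}{dt} \;=\; \int_M \Delta R\, dA_g \;=\; \int_{\partial M}\frac{\partial R}{\partial\nu}\, ds_g.
\]
To evaluate the boundary flux, I would derive the Robin-type identity $\partial R/\partial\nu = k_g\,R$ on $\partial M$. Writing the flow conformally $g = e^{2w}g_0$, with $\partial_t w = (r-R)/2$ and $k_g = e^{-w}(k_0+\partial w/\partial\nu_0)$, differentiating the boundary condition $k_g=\gamma/\sqrt{\phi(t)}$ in $t$ and invoking the normalisation identity $\dot\phi/\phi = r$ (which follows from $A_g\equiv 1$ together with $dA_{\tilde g}/dt = -r A_{\tilde g}$), the remaining factors cancel and one obtains exactly $\partial R/\partial\nu = k_g R$. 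Consequently
\[
\frac{dr}{dt} \;=\; \int_{\partial M} k_g R\, ds_g.
\]

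The third step is to promote this identity to a differential inequality of the form
\[
\frac{dr}{dt} \;\le\; -\,\frac{c\,r^2}{1+t},
\]
which integrates at once to $1/r(t)\ge c\log(1+t) + 1/r(0)$ and delivers the stated estimate. The idea is to couple the Gauss--Bonnet formula $r = 2\int_{\partial M}|k_g|\,ds_g$ to a Cauchy--Schwarz or weighted trace estimate on $\partial M$, and to the ODE $\dot\phi = r\phi$ for the conformal factor, the latter being the source of the time-dependent weight $1/(1+t)$ via the growth of $\phi$.

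The decisive obstacle will be this last step: the integrand $k_g R$ must be bounded from below, up to a factor $1/(1+t)$, by $r^2$, and this requires quantitatively linking the boundary values of $R$ to the volume average $r$. One natural tool is a boundary-trace (or Poincar\'e-type) inequality exploiting the Robin condition $\partial R/\partial\nu = k_g R$. A complementary route is a contradiction argument: if $r(t_0)\log(1+t_0)>1$, then on the ensuing interval $r$ would remain bounded below, $\phi$ would grow super-polynomially through $\dot\phi = r\phi$, $|k_g|=|\gamma|/\sqrt\phi$ would decay accordingly, and the identity $r = -2\int_{\partial M} k_g\, ds_g$ would then force $r$ to decay at a rate incompatible with the assumed lower bound.
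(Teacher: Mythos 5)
Your first two steps are fine: positivity of $r(t)=\int_M R\,dA_g$ via Gauss--Bonnet, and the identity $\frac{dr}{dt}=\int_{\partial M}k_g R\,ds_g$ (the Robin condition $\partial R/\partial\eta=k_gR$ is the paper's Lemma 2.1). But the entire content of the theorem lives in your third step, and that step is missing. You need $\int_{\partial M}k_gR\,ds_g\leq -c\,r^2/(1+t)$, i.e.\ a quantitative lower bound on the boundary values of $R$ in terms of the area average $r$, and no such trace/Poincar\'e inequality is proved or even plausibly available: nothing in the hypotheses prevents $R$ from being very small on $\partial M$ while $r$ is still large, so from $k_g\leq 0$, $R\geq 0$ you only get that $r$ is non-increasing, which gives no rate. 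Your fallback contradiction argument also does not close: the quantity $\int_{\partial M}k_g\,ds_g$ is scale-invariant, so the decay $|k_g|=|\gamma|/\sqrt{\phi}$ that you want to exploit is exactly cancelled by the growth of the boundary length element $ds_g=\sqrt{\phi}\,ds_{\tilde g}$. Concretely, Gauss--Bonnet gives $r(t)=-2\int_{\partial M}\gamma\,ds_{\tilde g(\tilde t)}$, so decay of $r$ is equivalent to decay of the \emph{unnormalised} boundary length, and the crude growth estimate $l_g(t)\leq l_g(0)\sqrt{\phi(t)/\phi(0)}$ (from $R\geq 0$) only yields boundedness of $r$, not the logarithmic decay.

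The paper gets the decay by a different mechanism, entirely in the unnormalised flow: writing $-\tilde A'(\tilde t)=\int_M\tilde R\,dA_{\tilde g}$ and, using the boundary average $r_\partial=\int_{\partial M}k_{\tilde g}\tilde R\,ds_{\tilde g}/\int_{\partial M}k_{\tilde g}\,ds_{\tilde g}\geq 0$, the exact relation $-\tilde A'(\tilde t)\,h(\tilde t)=c_1$ with $h=e^{\int_0^{\tilde t}r_\partial}$ nondecreasing; an integration by parts then gives $c_1\tilde t\leq h(\tilde t)\tilde A(0)$, hence $\int_M\tilde R\,dA_{\tilde g}\leq c/\tilde t$ with no need to bound boundary curvature from below. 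The logarithm then enters purely through the comparison of the two time parameters: from $d t=\phi\,d\tilde t=\tilde A^{-1}d\tilde t$ one shows $\tilde t\geq\log(1+t)$, and scale invariance of the total curvature transfers the $c/\tilde t$ bound to the normalised flow. If you want to salvage your direct approach you would have to reproduce this structure (or prove the missing boundary-trace estimate), since as written the proposal stops exactly where the theorem begins.
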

So we should expect convergence of the curvature towards zero, and indeed we can prove so assuming more symmetries
(see Section \ref{moresymmetries}).
However, the behavior of the unnormalised flow is quite different: even though the total curvature also goes to zero,
the curvature is blowing up (see Section \ref{asymptoticbehavior}).
\begin{theorem*}
If $k_{g_0}<0$ and is locally constant, and the length of the boundary in the normalised flow remains bounded away from 0, 
then there is constant $c_1>0$ such that $\int_{M}\tilde{R}^{2}d A \geq c_1$ and as a consequence,
there is a constant $c_2>0$ such that
\begin{equation*}
\tilde{R}_{\max}(\tilde{t}) \geq  c_2 \tilde{t}.
\end{equation*}

\end{theorem*}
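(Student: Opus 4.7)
The plan is to combine the Gauss--Bonnet theorem for surfaces with boundary with the Cauchy--Schwarz inequality to obtain the first estimate, and then to bootstrap this into the linear lower bound on $\tilde R_{\max}$ via a closed-form decay estimate on the unnormalised area $A_{\tilde g}(\tilde t)$.

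For the first estimate, note that $\chi(M)=0$ for a cylinder and that the boundary condition of the unnormalised flow preserves $k_{\tilde g}=k_{g_0}$ (locally constant, say $k_{\tilde g}=\gamma_i$ on the boundary component $C_i$, with each $\gamma_i<0$). Gauss--Bonnet then gives
\begin{equation*}
\int_M\tilde R\,dA_{\tilde g}=-2\int_{\partial M}k_{\tilde g}\,ds_{\tilde g}=-2\sum_i\gamma_i L_i(\tilde g)\geq 2\gamma_* L_{\tilde g}(\partial M),
\end{equation*}
where $\gamma_*:=\min_i|\gamma_i|>0$. Using the conformal relation $g=\phi\tilde g$ and the normalisation $\phi A_{\tilde g}=1$, the boundary lengths transform as $L_i(\tilde g)=L_i(g)\sqrt{A_{\tilde g}}$, so writing $L_g$ for the total normalised boundary length one has $\int_M\tilde R\,dA_{\tilde g}\geq 2\gamma_* L_g\sqrt{A_{\tilde g}}$. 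Cauchy--Schwarz, $\left(\int_M\tilde R\,dA_{\tilde g}\right)^2\leq A_{\tilde g}\int_M\tilde R^2\,dA_{\tilde g}$, then yields
\begin{equation*}
\int_M\tilde R^2\,dA_{\tilde g}\geq\frac{(2\gamma_* L_g\sqrt{A_{\tilde g}})^2}{A_{\tilde g}}=4\gamma_*^2 L_g^2\geq 4\gamma_*^2 c_0^2=:c_1,
\end{equation*}
using the hypothesis $L_g\geq c_0>0$.

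For the second estimate, the pointwise bound $\tilde R\leq\tilde R_{\max}$ yields $\tilde R_{\max}^2 A_{\tilde g}\geq\int_M\tilde R^2\,dA_{\tilde g}\geq c_1$, so the problem reduces to a decay estimate on $A_{\tilde g}(\tilde t)$. The area evolves as $\dot A_{\tilde g}=-\int_M\tilde R\,dA_{\tilde g}\leq -2\gamma_*L_g\sqrt{A_{\tilde g}}$, which under the substitution $u=\sqrt{A_{\tilde g}}$ becomes $\dot u\leq-\gamma_* c_0$, whence
\begin{equation*}
\sqrt{A_{\tilde g}(\tilde t)}\leq\sqrt{A_{\tilde g}(0)}-\gamma_* c_0\tilde t
\end{equation*}
on the interval of existence. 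Substituting gives $\tilde R_{\max}(\tilde t)\geq\sqrt{c_1}/(\sqrt{A_{\tilde g}(0)}-\gamma_* c_0\tilde t)$, and an elementary quadratic comparison (maximising $\tilde t(\sqrt{A_{\tilde g}(0)}-\gamma_* c_0\tilde t)$ over $\tilde t$) converts this into the desired lower bound $\tilde R_{\max}(\tilde t)\geq c_2\tilde t$, with the explicit value $c_2=8\gamma_*^2 c_0^2/A_{\tilde g}(0)=2c_1/A_{\tilde g}(0)$.

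The main obstacle is the bookkeeping between the two conformally related metrics: the hypothesis is on the normalised length $L_g$ but the Gauss--Bonnet argument naturally produces $L_{\tilde g}=L_g\sqrt{A_{\tilde g}}$, so the factors of $\sqrt{A_{\tilde g}}$ must be tracked consistently through both the Cauchy--Schwarz bound and the area-decay argument. A secondary remark is that the same estimate also gives the stronger bound $\tilde R_{\max}(\tilde t)\geq\sqrt{c_1}/(\sqrt{A_{\tilde g}(0)}-\gamma_* c_0\tilde t)$, which diverges as $\tilde t$ approaches $\sqrt{A_{\tilde g}(0)}/(\gamma_* c_0)$; the linear form stated in the theorem is the weaker but cleaner statement, and this is what the argument records.
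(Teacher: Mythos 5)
Your first half --- Gauss--Bonnet with the prescribed locally constant $k_{\tilde g}=\gamma_i<0$, the rescaling identity $L_{\tilde g}(\partial M)=L_g(\partial M)\sqrt{A_{\tilde g}}$, and Cauchy--Schwarz to get $\int_M\tilde R^2\,dA_{\tilde g}\geq c_1$ --- is exactly the paper's argument and is fine. The problem is the second half. Instead of integrating the differential inequality $-\tilde A'\geq 2\gamma_*c_0\sqrt{\tilde A}$, the paper deduces the linear growth by combining the lower bound just obtained with its \emph{previous} theorem, the decay estimate $\int_M\tilde R\,dA_{\tilde g}\leq c/\tilde t$ (proved for all time via the $r_\partial$ integration argument, with no hypothesis on the boundary length), through the chain
\begin{equation*}
c_1\;\leq\;\int_M\tilde R^2\,dA_{\tilde g}\;\leq\;\tilde R_{\max}(\tilde t)\int_M\tilde R\,dA_{\tilde g}\;\leq\;\tilde R_{\max}(\tilde t)\,\frac{c}{\tilde t},
\end{equation*}
which yields $\tilde R_{\max}(\tilde t)\geq (c_1/c)\,\tilde t$ for as long as the hypotheses hold, in particular for all time.

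Your route has a genuine gap: the integrated bound $\sqrt{A_{\tilde g}(\tilde t)}\leq\sqrt{A_{\tilde g}(0)}-\gamma_*c_0\,\tilde t$ is only meaningful on the bounded interval $\tilde t<\tilde t_*:=\sqrt{A_{\tilde g}(0)}/(\gamma_*c_0)$; at $\tilde t_*$ it forces the area to vanish, which is impossible while the flow exists (and the paper shows the unnormalised flow exists for all time once the normalised one does). Your own closing remark that the bound ``diverges as $\tilde t$ approaches $\sqrt{A_{\tilde g}(0)}/(\gamma_*c_0)$'' is the symptom: you have not proved the stated inequality for all $\tilde t$, only on a fixed finite interval, where a bound of the form $\tilde R_{\max}\geq c_2\tilde t$ is essentially vacuous (it already follows from the constant bound $\tilde R_{\max}\geq\sqrt{c_1/A_{\tilde g}(0)}$). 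Since the whole point of the theorem is blow-up of $\tilde R_{\max}$ in \emph{infinite} time, the conclusion as intended is not reached; to repair the argument you need an upper bound on $A_{\tilde g}(\tilde t)$ of order $1/\tilde t^2$ valid for all time, and the natural source of such a bound is precisely the $c/\tilde t$ decay of the total curvature that the paper invokes.
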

A natural question to ask is whether there are examples where the length of the boundary remains bounded away from 0,
and the answer is yes, and indeed we expect that this is always the case. Theorem 3 is proved in Section \ref{asymptoticbehavior}.

Finally, as we said above the convergence towards zero curvature
in this
case cannot be exponential, in contrast to the case of closed surfaces and of surfaces with totally geodesic boundary.
In fact, we prove the following theorem.
\begin{theorem*}
Assume that the initial data has the form 
\[
g_0=dr^2+f\left(\theta\right)^2d\theta^2,
\]
satisfies $k_g<0$ in one of the boundary components and is locally constant,
and that $R_{\min}\left(t\right)$ is attained in both components of $\partial M$. Then, there is a constant $c_2>0$ such that
for the normalised flow holds that
\begin{equation*}
R_{\max}(t) \geq \dfrac{2}{t+c_2}.
\end{equation*}
\end{theorem*}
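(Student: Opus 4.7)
The plan is to derive an autonomous ODE inequality for the total curvature $r(t):=\int_M R\,dA_g$ and then conclude via the elementary comparison $R_{\max}\geq r$. Assuming the initial metric has the rotationally symmetric form $dr^2+f(r)^2\,d\theta^2$ (the form $dr^2+f(\theta)^2\,d\theta^2$ being flat and hence incompatible with $k_{g_0}<0$), the $S^1$-symmetry is preserved by the flow, so $R(\cdot,t)$ is constant on each boundary circle. Combined with the hypothesis that $R_{\min}(t)$ is attained on both components of $\partial M$, this gives $R\equiv R_{\min}(t)$ on $\partial M$.

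The first key step is to convert the prescribed condition on $k_g$ into a Neumann-type condition for $R$, namely
\[
\partial_\nu R = R\,k_g \quad \text{on } \partial M.
\]
Writing $g=e^{2u}g_0$ so that $u_t=(r-R)/2$, the conformal-change formula for the geodesic curvature gives $\partial_t k_g=\tfrac12(R-r)k_g-\tfrac12\partial_\nu R$. On the other hand, $k_g=\gamma/\sqrt{\phi(t)}$ gives $\partial_t k_g=-\tfrac12(\phi'/\phi)k_g$, and the area normalisation $A_g\equiv 1$ forces $\phi'(t)/\phi(t)=r(t)$. Equating the two expressions yields the identity.

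Differentiating $r=\int R\,dA_g$ in $t$, using $\partial_t R=\Delta R+R(R-r)$ and $\partial_t(dA_g)=(r-R)dA_g$, the reactive terms cancel; the divergence theorem and the boundary identity just derived give
\[
\frac{dr}{dt}=\int_{\partial M}\partial_\nu R\,ds=R_{\min}(t)\int_{\partial M}k_g\,ds=-\frac{R_{\min}(t)\,r(t)}{2},
\]
the last equality being Gauss--Bonnet for the cylinder ($\chi(M)=0$). Since $R\geq R_{\min}$ and $A_g=1$ give $R_{\min}\leq r$, and since $R\geq 0$ is preserved from $R_{g_0}\geq 0$, one obtains the autonomous ODI $dr/dt\geq -r^2/2$. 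In the nontrivial case $r(0)>0$ this integrates to $1/r(t)\leq 1/r(0)+t/2$, i.e.\ $r(t)\geq 2/(t+2/r(0))$, and the comparison $R_{\max}\geq r$ finishes the argument with $c_2=2/r(0)$.

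The main difficulty is the derivation of the Neumann-type boundary condition $\partial_\nu R=R k_g$: this is where the explicit form $\phi'(t)/\phi(t)=r(t)$ of the normalising factor, and the conformal-change formula for $k_g$, both have to be used; once this identity is in hand the rest is a scalar ODE comparison. A secondary point is ruling out the degenerate case $R\equiv 0$: by Hopf's lemma applied at the boundary minima, $k_g\leq 0$ on each component, and combined with the strict inequality $k_{g_0}<0$ on at least one component Gauss--Bonnet forces $r(0)>0$.
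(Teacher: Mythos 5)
Your proposal is correct and follows essentially the same route as the paper: the identity $\frac{d}{dt}\int_M R\,dA_g=\int_{\partial M}k_g R\,ds_g=-\frac{1}{2}R_{\min}(t)\int_M R\,dA_g$ (via the Neumann condition $\partial_\eta R=k_gR$, constancy of $R$ on each boundary circle, and Gauss--Bonnet), the comparison $R_{\min}\le r$, integration of the resulting Riccati-type inequality, and the final step $R_{\max}\ge r$; the positivity $r(0)>0$ that you address at the end is handled in the paper by its standing assumption $R_{g_0}>0$. The only difference is notational: the paper runs the same computation in unnormalised variables as an ODE for $\tilde{A},\tilde{A}',\tilde{A}''$ and converts to normalised time via $t=\int_0^{\tilde{t}}\tilde{A}(\sigma)^{-1}\,d\sigma$, whereas you work directly in the normalised flow.
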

Notice that if $k_g=0$, the convergence is indeed exponential, but once we take $k_g\neq 0$ in one of the boundary components
(and for both $k_g\leq 0$), 
it is not so anymore. This result is proved in Section \ref{asymptoticbehavior}.

\section{Evolution equations and some technical geometric lemmas}

In this section, we collect some results to be used in the proofs of our main results.

\label{technicalstuff}
\subsection{Evolution equations}
The following evolution equation for the normalised flow is well known (a similar formula for the unnormalised 
flow holds, see Proposition 2.1 in \cite{CortissozMurcia}).
\begin{lemma}
\label{evolutioneqn}
In the normalised flow, the curvature satisfies the boundary value problem
\begin{equation}
\left\{
\begin{array}{l}
\dfrac{\partial R}{\partial t}=\Delta R + R\left(R-r\right) \quad\mbox{in}\quad M\times\left(0,T\right)\\
\dfrac{\partial R}{\partial \eta}=k_g R \quad\mbox{on}\quad \partial M\times\left(0,T\right).
\end{array}
\right.
\end{equation}
\end{lemma}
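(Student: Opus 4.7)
The plan is to derive the normalised evolution by pushing the unnormalised one (Proposition 2.1 of \cite{CortissozMurcia}, which provides $\partial_{\tilde t}\tilde R = \tilde\Delta \tilde R + \tilde R^{2}$ in the interior and $\partial_{\tilde\eta}\tilde R = \gamma\tilde R$ on the boundary) through the conformal rescaling $g = \phi(\tilde t)\tilde g$ together with the time reparametrisation $t(\tilde t) = \int_{0}^{\tilde t}\phi(\tau)\,d\tau$ introduced in the paper. Crucially, $\phi$ is a function of time only, so it is a spatial constant and the formulas for the conformal change of curvature, Laplacian and normal derivative reduce to scaling factors rather than nontrivial divergence terms.

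For the interior PDE I would first record the relevant transformation laws, all immediate consequences of $\phi$ being spatially constant: in two dimensions the conformal scaling gives $R = \tilde R/\phi$ and $\Delta = \phi^{-1}\tilde\Delta$, and the chain rule gives $\partial_{\tilde t} = \phi\,\partial_{t}$. The needed ODE for $\phi$ itself comes from the area normalisation: differentiating $A_{g}(M) = \phi\,A_{\tilde g}(M) \equiv 1$ in $\tilde t$ and using $\partial_{\tilde t}(dA_{\tilde g}) = -\tilde R\,dA_{\tilde g}$ (a direct consequence of $\partial_{\tilde t}\tilde g = -\tilde R\tilde g$ in two dimensions) yields $\phi'(\tilde t) = r\phi^{2}$, with $r = \int_{M}\tilde R\,dA_{\tilde g} = \int_{M} R\,dA_{g}$. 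Substituting $\tilde R = \phi R$ into the unnormalised equation, expanding $\partial_{\tilde t}(\phi R) = \phi' R + \phi^{2}\partial_{t}R = r\phi^{2}R + \phi^{2}\partial_{t}R$, writing $\tilde\Delta\tilde R = \phi^{2}\Delta R$ and $\tilde R^{2} = \phi^{2}R^{2}$, and dividing by $\phi^{2}$, produces exactly $\partial_{t}R = \Delta R + R(R-r)$.

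For the boundary condition, the geometric input is the comparison of unit outward normals: since $g = \phi\tilde g$, a $g$-unit normal $\eta$ has $\tilde g$-length $1/\sqrt\phi$, so $\tilde\eta = \sqrt{\phi}\,\eta$ and hence $\partial_{\tilde\eta} = \sqrt{\phi}\,\partial_{\eta}$. Combining this with $k_{g} = \gamma/\sqrt{\phi}$, which is already recorded as the boundary condition in \eqref{Riccinormalised}, and applying it to $\partial_{\tilde\eta}\tilde R = \gamma\tilde R$ with $\tilde R = \phi R$, one gets $\phi\,\partial_{\eta}R = \gamma\sqrt{\phi}\,R$; dividing by $\phi$ yields $\partial_{\eta}R = (\gamma/\sqrt{\phi})R = k_{g}R$, as required.

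The main difficulty is the careful bookkeeping of the rescaling rather than any hard analysis, and in particular two computations deserve attention: the ODE $\phi' = r\phi^{2}$ encodes the area normalisation and is precisely what produces the $-rR$ term distinguishing the normalised equation from the unnormalised one, and the scaling $\partial_{\tilde\eta} = \sqrt{\phi}\,\partial_{\eta}$ is what converts the time-independent boundary condition $\partial_{\tilde\eta}\tilde R = \gamma\tilde R$ of the unnormalised flow into the dynamic condition $\partial_{\eta}R = k_{g}R$ with the time-dependent $k_{g} = \gamma/\sqrt\phi$ of the normalised flow.
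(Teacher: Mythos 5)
Your proposal is correct and follows essentially the same route as the paper: the interior equation is the standard Hamilton computation obtained by pushing the unnormalised equation through the rescaling $g=\phi\tilde g$, $t=\int_0^{\tilde t}\phi\,d\tau$, and the boundary condition comes from the scaling invariance of the unnormalised identity $\partial_{\tilde\eta}\tilde R=k_{\tilde g}\tilde R$, which is exactly the argument the paper invokes (citing \cite{Hamilton} and \cite{CortissozMurcia} rather than writing out the bookkeeping). Your version simply makes the scaling factors $R=\tilde R/\phi$, $\tilde\Delta=\phi\Delta$, $\tilde\eta=\sqrt{\phi}\,\eta$, $\phi'=r\phi^2$ explicit, and all of them check out.
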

\begin{proof}
The equation in the interior of $M$ is well known
(see \cite{Hamilton}). The formula for the normal derivative requires some clarification: it comes from the 
formula for the normal derivative in the case of the unnormalised flow (Proposition 2.1 in \cite{CortissozMurcia}) 
by noticing that this identity is scaling invariant.
\end{proof}

{\bf Remark. }
Lemma \ref{evolutioneqn} has as an immediate consequence that nonnegative scalar curvature is preserved and we leave the
proof to the reader. This fact
shall be used throughout the paper.

\subsection{Geometrical lemmas}

We define a parallel of latitude $s$ of the barrel as a curve $\mathbb{S}^1\times \left\{s\right\}$. 
\begin{lemma}
\label{parallel}
Consider a metric of the form
\[
ds^2=d\sigma^2+f\left(\theta,\sigma\right)^2d\theta^2
\]
in $M=\mathbb{S}^1\times\left[-\rho,\rho\right]$. Assume that there is $\alpha>0$ such that $R\geq -\alpha$,  $\left|k_g\right|\leq C$, and let $L_s$
be the length of the parallel of latitude $s$. Then for any other parallel (including of course any boundary component), we have an estimate
\[
L_se^{-2\rho\left(\alpha \rho + C\right)} \leq L_q\leq L_s e^{2\rho\left(\alpha \rho + C\right)}.
\]
\end{lemma}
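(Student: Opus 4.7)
The plan is to reduce the statement to a one-dimensional ODE estimate for the length function $L(\sigma):=\int_0^{2\pi}f(\theta,\sigma)\,d\theta$ of the parallel at latitude $\sigma$. Since the bound we want is multiplicative and $|q-s|\le 2\rho$, it is natural to work with $F(\sigma):=\log L(\sigma)$, for which the claim is equivalent to $|F(q)-F(s)|\le 2\rho(\alpha\rho+C)$. So the strategy is: (i) produce a one-sided bound on $F''$ from the curvature hypothesis, (ii) produce two-sided boundary bounds on $F'$ from $|k_g|\le C$, and (iii) integrate.

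For step (i), I would differentiate under the integral twice. The Gaussian curvature of a metric of the form $d\sigma^2+f(\theta,\sigma)^2d\theta^2$ is $K=-f_{\sigma\sigma}/f$, so
\[
L''(\sigma)=\int_0^{2\pi}f_{\sigma\sigma}(\theta,\sigma)\,d\theta=-\int_0^{2\pi}K\,f\,d\theta.
\]
Using $R\ge-\alpha$, equivalently $K\ge -\alpha/2$, gives $L''(\sigma)\le (\alpha/2)L(\sigma)$. Because $F''=L''/L-(F')^2$, the nonnegative term $(F')^2$ is discarded to obtain the pointwise one-sided bound $F''(\sigma)\le \alpha/2$ on $[-\rho,\rho]$. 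Note that the absence of any upper bound on $K$ is not a problem here: it is precisely the passage to $F=\log L$ that produces the $-(F')^2$ term, which absorbs the direction in which no curvature control is assumed.

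For step (ii), a short Christoffel-symbol computation for the parallel $\sigma=\text{const}$ shows that its geodesic curvature with respect to the $+\partial_\sigma$-normal is $-f_\sigma/f$. On $\sigma=\rho$ this is the outward normal of $M$, while on $\sigma=-\rho$ the outward normal is $-\partial_\sigma$, so the sign flips. Consequently
\[
L'(\rho)=-\int_{\sigma=\rho}k_g\,d\ell,\qquad L'(-\rho)=\int_{\sigma=-\rho}k_g\,d\ell,
\]
and the hypothesis $|k_g|\le C$ on $\partial M$ yields $|F'(\pm\rho)|=|L'(\pm\rho)|/L(\pm\rho)\le C$.

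For step (iii), I would integrate the one-sided inequality from each endpoint. From $-\rho$ we get $F'(\sigma)\le F'(-\rho)+(\alpha/2)(\sigma+\rho)\le C+\alpha\rho$, and from $\rho$ we get $F'(\sigma)\ge F'(\rho)-(\alpha/2)(\rho-\sigma)\ge -C-\alpha\rho$. Hence $|F'|\le \alpha\rho+C$ throughout $[-\rho,\rho]$, and a final integration gives $|F(q)-F(s)|\le |q-s|(\alpha\rho+C)\le 2\rho(\alpha\rho+C)$, which exponentiates to the stated two-sided inequality. The only place where a reader might worry is the coupling between $F''\le\alpha/2$ and the two-sided boundary data; this is the step that really uses both boundary components of the cylinder, and it is also what makes the argument fail for a general (non-cylindrical) surface with only one boundary component.
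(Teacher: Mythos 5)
Your proof is correct and takes essentially the same route as the paper: the paper bounds $\varphi=\partial_\sigma\log f$ pointwise via the Riccati inequality $\varphi_\sigma=-\tfrac{R}{2}-\varphi^2\le \tfrac{\alpha}{2}$ together with the bound $|\varphi|\le C$ coming from the geodesic curvature of the two boundary circles, and then integrates in $\sigma$ and $\theta$, while you simply average over $\theta$ first and run the identical argument (one-sided second-derivative bound plus boundary values of the first derivative at both ends, integrated over a width of at most $2\rho$) on $F=\log L$. The only slight misstatement is the parenthetical claim that the $-(F')^2$ term ``absorbs'' the missing upper bound on $K$ --- that term is simply discarded, and what actually compensates for the one-sidedness of the curvature hypothesis is, as you yourself note at the end, integrating the inequality $F''\le \tfrac{\alpha}{2}$ from both boundary components.
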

\begin{proof}
We proof the upper bound first. First of all, let
\[
\varphi=\frac{f_{\sigma}\left(\sigma,\theta\right)}{f\left(\sigma,\theta\right)}=-k\left(\sigma,\theta\right),
\]
where $k\left(\sigma,\theta\right)$ is the geodesic curvature of the parallel of latitude $\sigma$ at
the point $\left(\sigma,\theta\right)$.
On the other hand
\[
\varphi_{\sigma}=-\dfrac{R}{2}-\varphi^2\leq \dfrac{\alpha}{2},
\]
so 
\[
\varphi\left(\sigma,\theta\right)\leq C+\alpha \rho.
\]
Hence we have
\[
f\left(q,\theta\right)=f\left(s,\theta\right)e^{\int_{s}^r \varphi\left(\sigma,\theta\right)\,d\sigma}\leq 
f\left(s,\theta\right)e^{2\rho \left(\alpha\rho +C\right)},
\]
so we obtain the upper bound by integration. The lower bound follows from the fact that in the 
previous argument, $s$ and $q$ are arbitrary.
\end{proof}

\begin{lemma}
\label{area1}
Consider a metric of the form
\[
ds^2=d\sigma^2+f\left(\theta,\sigma\right)^2d\theta^2
\]
in $M=\mathbb{S}^1\times\left[-\rho,\rho\right]$. Assume that there is $\alpha>0$ such that $R\geq -\alpha$,  $\left|k_g\right|\leq C$, let $L$ the minimum 
length of length of the boundary components. Then
\[
2\rho Le^{-2\rho\left(\alpha \rho + C\right)}\leq A\left(M\right)\leq 2\rho Le^{2\rho\left(\alpha \rho + C\right)}.
\]
\end{lemma}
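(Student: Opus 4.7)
The plan is to compute the area by slicing along parallels of latitude and then invoke Lemma \ref{parallel} to control each slice against the boundary length $L$.

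First I would write the area in terms of the length function of parallels. Since the metric has the warped form $d\sigma^2+f(\theta,\sigma)^2\,d\theta^2$, by Fubini's theorem
\[
A(M)=\int_{-\rho}^{\rho}\!\!\int_{\mathbb{S}^1} f(\theta,\sigma)\,d\theta\,d\sigma=\int_{-\rho}^{\rho} L_\sigma\,d\sigma,
\]
where $L_\sigma$ denotes the length of the parallel of latitude $\sigma$. Thus the problem reduces to uniformly bounding $L_\sigma$ above and below in terms of $L=\min\{L_{-\rho},L_{\rho}\}$.

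Next I would apply Lemma \ref{parallel} with $s=\sigma$ and $q=\pm\rho$. This yields, for each $\sigma\in[-\rho,\rho]$,
\[
L_\sigma e^{-2\rho(\alpha\rho+C)}\le L_{\pm\rho}\le L_\sigma e^{2\rho(\alpha\rho+C)}.
\]
Rearranging the left inequality and selecting the sign that achieves the minimum gives $L_\sigma\le L\,e^{2\rho(\alpha\rho+C)}$; rearranging the right inequality and using that $L_{\pm\rho}\ge L$ for either sign gives $L_\sigma\ge L\,e^{-2\rho(\alpha\rho+C)}$.

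Finally I would integrate these pointwise bounds over $\sigma\in[-\rho,\rho]$, which contributes a factor of $2\rho$, to obtain
\[
2\rho L\,e^{-2\rho(\alpha\rho+C)}\le A(M)\le 2\rho L\,e^{2\rho(\alpha\rho+C)},
\]
as desired. There is no real obstacle here: the proof is essentially a corollary of Lemma \ref{parallel} combined with the trivial observation that the area is the $\sigma$-integral of the parallel lengths; the only thing to keep straight is the direction of the inequalities so that the minimum boundary length $L$ appears on the correct side.
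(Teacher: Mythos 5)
Your proof is correct and follows essentially the same route as the paper: slicing the area as the $\sigma$-integral of parallel lengths and comparing each parallel with the shorter boundary component via the exponential estimate of Lemma \ref{parallel} (the paper integrates the pointwise bound on $f$ from that lemma's proof, which is the same estimate in pointwise rather than integrated form). No gaps.
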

\begin{proof}
Our point of departure is the following inequality, obtained in the proof of
the previous lemma, for $\lambda=\pm \rho$ and $-\rho\leq \sigma\leq \rho$ arbitrary,
\[
f\left(\sigma,\theta\right)=f\left(\lambda,\theta\right)e^{\int_{\lambda}^r \varphi\left(\sigma,\theta\right)\,d\sigma}\leq 
f\left(\lambda,\theta\right)e^{2\rho \left(2\alpha\rho +C\right)},
\]
and from this inequality it follows that
\begin{eqnarray*}
A\left(M\right)&=& \int_0^{2\pi}\int_{-\rho}^{\rho}f\left(\sigma,\theta\right)\,d\sigma d\theta\\
&\leq& \int_0^{2\pi}\int_{-\rho}^{\rho} f\left(\lambda,\theta\right)e^{2\rho \left(2\alpha\rho+C\right)}\,d\sigma d\theta\\
&=& 2\rho L e^{2\rho\left(2\alpha\rho+ C\right)}.
\end{eqnarray*}
The lower bound is obtained in a similar fashion.
\end{proof}
 
{\bf Remark.} This lemma has as a corollary that if we keep the geodesic curvatures of the the boundary components and the diameter
of the surface bounded, then if the length of one boundary component goes to zero, so does the area of the surface.
The results of this section will be used to show our longtime existence result for the normalised flow (see next section).

\section{Existence for all time}
\label{longtimeexistence}

To show that the solution to the normalisation of 
(\ref{Ricciunnormalised}) exists for all time, we follow closely the ideas in \cite{Cortissoz} and
correct some innacuracies found in there. 
In this section we will assume the the initial metric is of the form
\[
g_0=dr^2+f\left(\theta\right)^2d\theta^2.
\]
This form of the metric is preserved by both the normalised and the unnormalised flow.

Our purpose is to show the following theorem, which in turn implies the existence of the normalised flow for all
time (see Section 2.1 in \cite{CortissozMurcia}, beware that the proof given in \cite{Murcia} is not correct).
\begin{theorem}
Assume that the initial data satisfies $R_{g_0}\geq 0$ and $k_{g_0}\leq 0$. Then,
for any $T<\infty$, there exists a constant $C\left(g_0,T\right)$, where $R_0$ is the
scalar curvature of the initial metric, such that on $\left(0,T\right)$, the
scalar curvature of a solution to (\ref{Ricciunnormalised}) satisfies
\[
R\leq C\left(g_0,T\right).
\]
\end{theorem}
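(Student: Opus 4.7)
The plan is to exploit the rotational symmetry of the initial data, which is preserved by the unnormalised flow, to reduce \eqref{Ricciunnormalised} to a one-dimensional quasilinear parabolic problem, and then to combine parabolic maximum-principle arguments with the integral/geometric bounds of Section \ref{technicalstuff} to rule out finite-time blow-up of $\tilde R$ on any interval $[0,T]$.

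The first step is the reduction. Under symmetry the metric stays of the form $g(\tilde t)=dr^{2}+f(r,\tilde t)^{2}\,d\theta^{2}$, so $\tilde R=\tilde R(r,\tilde t)$ solves the 1D problem
\[
\tilde R_{\tilde t}=\tilde R_{rr}+\frac{f_{r}}{f}\,\tilde R_{r}+\tilde R^{2}\quad\text{in the interior},\qquad \frac{\partial \tilde R}{\partial \eta}=k_{g_{0}}\tilde R\quad\text{on }\partial M,
\]
with $k_{g_{0}}\le 0$ constant on each boundary circle by symmetry. Nonnegativity of $\tilde R$ is preserved by the remark after Lemma \ref{evolutioneqn}, and integrating the evolution equation against $dA$ together with $\partial_{\tilde t}\,dA=-\tilde R\,dA$ gives
\[
\frac{d}{d\tilde t}\int_{M}\tilde R\,dA=\int_{\partial M}k_{g_{0}}\tilde R\,ds\le 0,
\]
so the total scalar curvature is monotone non-increasing. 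Combined with Lemmas \ref{parallel} and \ref{area1} this yields two-sided control on the area and on the length of each boundary circle as long as $\tilde R$ remains bounded.

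The main obstacle is upgrading this $L^{1}$ control into an $L^{\infty}$ bound on $\tilde R$, since the naive maximum-principle estimate $\dot{\tilde R}_{\max}\le \tilde R_{\max}^{2}$ only gives existence up to $1/\tilde R_{\max}(0)$. I would proceed by a bootstrap in the conformal factor $u$ defined by $g=e^{2u}g_{0}$: one has $u_{\tilde t}=-\tilde R/2\le 0$, so $u\le 0$ and the metric is uniformly dominated from above by $g_{0}$; meanwhile the boundary identity $\partial_{n_{g_{0}}}u=k_{g_{0}}(e^{u}-1)\ge 0$ together with the one-dimensional structure permits the construction of a radial supersolution for $u$ valid on any finite interval $[0,T]$, at which point $\tilde R=e^{-2u}(R_{g_{0}}-2\Delta_{g_{0}}u)$ inherits a pointwise bound. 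The delicate point, and precisely the place where the argument of \cite{Cortissoz} needs to be corrected, is making the choice of barrier compatible with the Robin-type boundary condition for $u$; this is feasible thanks to the one-dimensional reduction and to the favourable signs $k_{g_{0}}\le 0$ and $\tilde R\ge 0$, which together ensure that the barrier's normal derivative dominates the prescribed one and that the comparison can be closed on $[0,T]$ without feedback from the nonlinearity.
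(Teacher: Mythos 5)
Your outline stalls at exactly the point you yourself identify as the main obstacle, and the proposed resolution does not close it. A barrier (super/subsolution) argument for the conformal factor $u$, with $\tilde g=e^{2u}g_0$, can only give a $C^0$ bound on $u$ on $[0,T]$; but the curvature is $\tilde R=e^{-2u}\left(R_{g_0}-2\Delta_{g_0}u\right)$, which involves \emph{second} derivatives of $u$, so the final claim that a pointwise bound on $u$ makes $\tilde R$ ``inherit a pointwise bound'' is simply not valid. To pass from $\|u\|_{\infty}$ to control of $\Delta_{g_0}u$ you would need interior and, crucially, boundary parabolic regularity for the quasilinear equation $u_{\tilde t}=e^{-2u}\Delta_{g_0}u-\tfrac12 e^{-2u}R_{g_0}$ with the nonlinear Robin condition $\partial_{\eta_{0}}u=k_{g_0}\left(e^{u}-1\right)$; none of this is stated or proved, and it is precisely the hard part of the theorem. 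Moreover the barrier construction itself is only asserted to be ``feasible''; the easy sign observation $u_{\tilde t}=-\tilde R/2\le 0$ gives $u\le 0$, but no lower barrier is exhibited, and your appeal to Lemmas \ref{parallel} and \ref{area1} is circular as written (``as long as $\tilde R$ remains bounded'' is what you are trying to prove).

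For comparison, the paper avoids any direct estimate on the conformal factor and instead adapts Hamilton's potential-function method: it solves $\Delta_g f=R-r$ with homogeneous Neumann data, derives $\partial_t f=\Delta_g f+rf+\psi$ with $\Delta\psi=-r'$ and $\partial\psi/\partial\eta=-k_gR$, and studies $h=\Delta_g f+\left|\nabla f\right|^2$, which satisfies a reaction--diffusion inequality with the favourable boundary condition $\partial h/\partial\eta=k_gR\le 0$. The extra terms are handled by showing $\int_0^T r\,dt<\infty$ (monotonicity and scale invariance of the total curvature) and $\int_0^T\|\nabla\psi\|_{\infty}\,dt<\infty$; the latter rests on Lemma \ref{infiniteintegral}, i.e.\ $\int_0^T\overline{R}\,dt<\infty$ on the boundary, which is obtained non-circularly from the fixed area of the normalised flow together with Lemma \ref{area1}. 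The maximum principle applied to a suitably corrected quantity $v$ then bounds $h$, hence $R\le h+r$, on $(0,T)$. If you want to salvage your approach you would have to supply genuine second-order (Schauder/Krylov--Safonov type) estimates up to the boundary, which is a substantially different and heavier argument than the one sketched.
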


To begin with the proof, first we consider Poisson's equation

\begin{equation}
\label{Poisson}
\left\{
\begin{array}{l}
\Delta_{g}f=R-r
\quad\mbox{in} \quad M\\
\dfrac{\partial f}{\partial \eta_{g}}=0,
\qquad \mbox{on} \quad \partial M,
\end{array}
\right.
\end{equation}
where $A$ and $k_{g}$ are the area of the surface at time $t$ and the geodesic curvature of the boundary at time $t$ respectively.
We obtain the  following result.
\begin{theorem}
There exists a function $\psi$ such that
\begin{equation}
\frac{\partial f}{\partial t}=\Delta _{g}f+r f+\psi.
\end{equation}
where $\psi$ satisfies an equation
\[
\left\{
\begin{array}{l}
\Delta \psi =- r' \quad \mbox{in} \quad M\times\left(0,T\right)\\
\displaystyle\frac{\partial \psi}{\partial \eta_g}=-k_gR \quad\mbox{on} \quad\partial M\times \left(0,T\right).
\end{array}
\right.
\]
\end{theorem}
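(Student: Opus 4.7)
The plan is to \emph{define} $\psi$ by the very identity we wish to establish, namely
\[
\psi := \frac{\partial f}{\partial t} - \Delta_{g} f - rf,
\]
normalising $f$ (say by requiring $\int_M f\, dA_g = 0$ at all times) so that $\partial f/\partial t$ is unambiguous. The proof then reduces to verifying that this $\psi$ satisfies the prescribed interior equation and boundary condition.

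For the interior equation the main ingredient is the commutation formula between $\partial_t$ and $\Delta_g$ under the normalised flow. Since $\partial g/\partial t=(r-R)g$ is purely conformal and we are on a surface, writing $g(t)=e^{2u(t)}g(0)$ locally with $\partial_t u=(r-R)/2$ and using $\Delta_g=e^{-2u}\Delta_{g(0)}$ gives
\[
\frac{\partial}{\partial t}(\Delta_g f)=(R-r)\,\Delta_g f+\Delta_g\!\left(\frac{\partial f}{\partial t}\right).
\]
Differentiating the Poisson equation $\Delta_g f=R-r$ in $t$ and substituting $\partial_t R=\Delta R+R(R-r)$ from Lemma \ref{evolutioneqn}, together with $\Delta_g f=R-r$ again, yields
\[
\Delta_g\!\left(\frac{\partial f}{\partial t}\right)=\Delta_g R+r(R-r)-r'=\Delta_g(\Delta_g f+rf)-r',
\]
so $\Delta_g\psi=-r'$ as required.

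For the boundary condition I would use that $\partial f/\partial \eta_g=0$ holds identically on $\partial M\times(0,T)$. Writing $\eta_g=\hat\eta/|\hat\eta|_g$ for a fixed transverse vector field $\hat\eta$, and noting that $|\hat\eta|_g$ evolves only by the scalar factor $(r-R)/2$, differentiating $\hat\eta(f)=0$ on $\partial M$ in $t$ gives $\hat\eta(\partial_t f)=0$ on $\partial M$, i.e.\ $\partial(\partial_t f)/\partial\eta_g=0$. Combining this with $\partial(\Delta_g f)/\partial\eta_g=\partial R/\partial\eta_g=k_gR$ (from the Neumann condition in Lemma \ref{evolutioneqn}) and $\partial f/\partial\eta_g=0$ yields $\partial\psi/\partial\eta_g=-k_gR$ on $\partial M$.

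The main consistency point that I would verify before these calculations is that the posed Neumann problem for $\psi$ is solvable, i.e.\ $\int_M(-r')\,dA_g=\int_{\partial M}(-k_gR)\,ds_g$, which, because $A_g(M)=1$, amounts to $r'=\int_{\partial M}k_gR\,ds_g$. This follows from differentiating $r=\int R\,dA_g$, cancelling the $(r-R)$ contribution from the evolution of the area element against the $R(R-r)$ term in $\partial_t R$, and applying the divergence theorem to $\int_M\Delta R\,dA_g$ with $\partial R/\partial\eta_g=k_gR$. The main obstacle I anticipate is this bookkeeping: once the commutation formula and the compatibility condition are in place, the verification of both the interior equation and the boundary condition for $\psi$ is a direct computation.
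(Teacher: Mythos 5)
Your proposal is correct, and it is in substance the argument the paper relies on but does not write out: the paper's ``proof'' is a citation to \cite{Cortissoz} (Hamilton's potential-function computation adapted to the boundary case) together with the remark that the normal derivative must be corrected using Lemma \ref{evolutioneqn}; your computation is exactly the verification that this amounts to. Each step checks out: the commutation identity $\partial_t(\Delta_g h)=(R-r)\Delta_g h+\Delta_g(\partial_t h)$ is the correct two-dimensional conformal formula for $\partial_t g=(r-R)g$; differentiating $\Delta_g f=R-r$ and inserting $\partial_t R=\Delta R+R(R-r)$ gives $\Delta_g(\partial_t f)=\Delta_g R+r(R-r)-r'=\Delta_g(\Delta_g f+rf)-r'$, hence $\Delta_g\psi=-r'$; and since the metrics $g(t)$ are all conformal, the normal \emph{direction} along $\partial M$ is time-independent, so differentiating $\hat\eta(f)=0$ in $t$ legitimately yields $\partial(\partial_t f)/\partial\eta_g=0$, which combined with $\partial(\Delta_g f)/\partial\eta_g=\partial R/\partial\eta_g=k_gR$ (the corrected boundary condition of Lemma \ref{evolutioneqn}) gives $\partial\psi/\partial\eta_g=-k_gR$. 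Your compatibility check $r'=\int_{\partial M}k_gR\,ds_g$ is also right (and is what makes the statement of the theorem consistent), though with your definition of $\psi$ as the residual it is automatic rather than needed. The only point worth making explicit is the one you already flag: the Neumann problem for $f$ is solvable because $\int_M(R-r)\,dA_g=0$, and the normalisation $\int_M f\,dA_g=0$ (or any smooth-in-time choice of the additive constant, which only shifts $\psi$ by a spatial constant and so does not affect the stated equations) gives a family $f(\cdot,t)$ depending smoothly on $t$, so that $\partial f/\partial t$ and the differentiations you perform are justified.
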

\begin{proof}
The proof is to be found in \cite{Cortissoz}. However, the value of the normal derivative must be corrected by the arguments 
given in the proof of Lemma \ref{evolutioneqn} above.

 \end{proof}
 
 The following lemma will be useful.
 
 \begin{lemma}
 Let $r$ be the average scalar curvature. Then for the normalised flow, and any $T<\infty$ 
 (so that the normalised flow is defined on $\left[0,T\right)$) we have
 \[
 \int_0^T r\left(t\right)\,dt<\infty.
 \]
 \end{lemma}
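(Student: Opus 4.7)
The plan is to establish the stronger statement that $r(t)$ is monotone non-increasing along the normalised flow, which immediately gives $\int_0^T r(t)\,dt \leq T\,r(0) < \infty$ since $T$ is finite. This should work because the hypotheses of the theorem we are in (namely $R_{g_0}\geq 0$ and $k_{g_0}\leq 0$) propagate to all times: nonnegativity of $R$ is preserved by the maximum principle applied to the equation in Lemma \ref{evolutioneqn} (as noted in the remark after it), and on the boundary $k_g = \gamma/\sqrt{\phi(t)}$ retains the sign of $\gamma = k_{g_0}\leq 0$.

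First, I would use that the normalisation forces $A_g(M)=1$, so that $r(t) = \int_M R\,dA_g$ is literally the total scalar curvature. Then I would compute $dr/dt$ by differentiating under the integral sign, using the two evolution identities
\[
\frac{\partial R}{\partial t} = \Delta R + R(R-r), \qquad \frac{\partial}{\partial t}(dA_g) = (r-R)\,dA_g.
\]
The two $R^2$ and $rR$ terms cancel pairwise, leaving
\[
\frac{dr}{dt} = \int_M \Delta R \, dA_g.
\]
Applying the divergence theorem and then the boundary condition $\partial R/\partial \eta = k_g R$ from Lemma \ref{evolutioneqn}, this reduces to
\[
\frac{dr}{dt} = \int_{\partial M} k_g R\, ds.
\]

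Since $k_g\leq 0$ and $R\geq 0$ throughout $[0,T)$, the integrand is nonpositive, hence $r$ is non-increasing. Therefore $r(t)\leq r(0)$ for every $t\in[0,T)$, and integrating gives the finite bound $\int_0^T r(t)\,dt \leq T\,r(0)$. The only step requiring a moment of care is the justification that $R\geq 0$ and $k_g\leq 0$ persist up to time $T$, but both are already in hand from Lemma \ref{evolutioneqn} and the explicit form of the boundary condition in \eqref{Riccinormalised}, so there is no real obstacle.
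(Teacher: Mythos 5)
Your proof is correct and relies on the same mechanism as the paper's: under $R\geq 0$ and $k_g\leq 0$ the total curvature is nonincreasing, so $r(t)\leq r(0)$ and hence $\int_0^T r\,dt\leq T\,r(0)<\infty$ for finite $T$. The only (cosmetic) difference is that the paper establishes the monotonicity for the unnormalised flow and transfers it to $r$ via scale invariance of $\int R\,dA_g$ together with $A_g(M)=1$, whereas you compute $\frac{dr}{dt}=\int_{\partial M}k_g R\,ds\leq 0$ directly in the normalised flow, where the $R(R-r)$ reaction term cancels against the evolution of the area element — an equally valid, self-contained variant.
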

 \begin{proof}
 Notice that for the unnormalised flow the quantity $\displaystyle \int \tilde{R}\,dA_{\tilde{g}}=-\int k_{\tilde{g}}\,ds_{\tilde{g}}$ 
is nonincreasing under the assumption of nonnegative curvature and $k_{\tilde{g}}\leq 0$; it is also scaling
 invariant, and it corresponds to $r$ in the normalised flow. Hence $r$ is bounded above, and this shows the lemma.
 
 \end{proof}

\begin{lemma}
\label{infiniteintegral}
Let $\overline{R}\left(t\right)$ be the value
of the scalar curvature when restricted to one of the components of the boundary at time $t$. Then we have that
\[
\int_0^T \overline{R}\left(t\right)\,dt<\infty.
\]
\end{lemma}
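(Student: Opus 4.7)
The plan is to convert $\int_0^T \overline R\,dt$ into a ratio of boundary lengths via the conformal flow equation and bound those lengths using the geometric material of Section \ref{technicalstuff}.

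First, under the symmetry assumption of this section the scalar curvature depends only on $r$, so on each boundary component $\{r=\pm\rho\}$ it is a spatial constant $R_\pm(t)=\overline R(t)$. Since $\partial_t g=(r-R)g$ is conformal, the length of a boundary parallel evolves via the ODE
$$\frac{d L_\pm}{dt}=\tfrac{1}{2}\bigl(r(t)-R_\pm(t)\bigr)\,L_\pm(t),$$
which integrates to
$$\int_0^T R_\pm(t)\,dt=\int_0^T r(t)\,dt+2\log L_\pm(0)-2\log L_\pm(T).$$
The first term on the right is finite by the preceding lemma, so the task reduces to producing a uniform lower bound $L_\pm(T)\ge c>0$ with $c$ depending only on the initial data.

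To obtain such a bound I would invoke Lemma \ref{area1}. The hypothesis $R\ge 0$ is preserved by the normalised flow (remark after Lemma \ref{evolutioneqn}), so one may take $\alpha=0$. For a time-uniform control of $|k_g|$ I use that $\tilde R\ge 0$ makes the unnormalised area $A_{\tilde g}(M)$ non-increasing, hence
$$\phi(\tilde t)=\frac{1}{A_{\tilde g}(M)(\tilde t)}\ge\frac{1}{A_{g_0}(M)},$$
so the boundary condition in (\ref{Riccinormalised}) gives
$$|k_g|=\frac{|\gamma|}{\sqrt{\phi}}\le|\gamma|\sqrt{A_{g_0}(M)}=:C,$$
a constant independent of $t$. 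Since the normalised area equals $1$, the proof of Lemma \ref{area1}, carried out with the base parallel $\lambda=+\rho$ and $\lambda=-\rho$ separately, delivers a uniform positive lower bound on each $L_\pm(t)$. Combining this with the identity above completes the proof.

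The only subtle step is the uniform bound on $|k_g|$: the normalised boundary condition carries the factor $1/\sqrt{\phi(t)}$, which a priori depends on the whole solution, but the monotonicity of the unnormalised area under nonnegative scalar curvature pins $\phi$ from below by $1/A_{g_0}$, taming that dependence. With this bound at hand, everything else is a direct application of the previous lemma and of the geometric estimates of Section \ref{technicalstuff}.
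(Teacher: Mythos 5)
Your approach is essentially the paper's own, just run forwards instead of by contradiction: the paper notes that the conformal factor on a boundary circle is $\exp\bigl(\int_0^t (r-\overline R)\,d\tau\bigr)$, so divergence of $\int_0^T\overline R\,dt$ (with $\int_0^T r\,dt<\infty$ from the preceding lemma) would force the boundary length, and then by Lemma \ref{area1} the area, to tend to $0$, contradicting $A_g\equiv 1$; your length identity $\int_0^T R_\pm\,dt=\int_0^T r\,dt+2\log\bigl(L_\pm(0)/L_\pm(T)\bigr)$ is the same mechanism. Your explicit uniform bound $|k_g|\le|\gamma|\sqrt{A_{g_0}}$, obtained from $\phi=1/A_{\tilde g}\ge 1/A_{g_0}$ because the unnormalised area is nonincreasing when $\tilde R\ge 0$, is correct and is a detail the paper leaves implicit.

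There is, however, one missing step in your application of Lemma \ref{area1}. The constants in that lemma involve not only $\alpha$ and $C$ but also $\rho$, the half-width of the cylinder when the metric at time $t$ is rewritten in radial arclength form $d\sigma^2+f(\theta,\sigma)^2d\theta^2$; this width evolves under the flow, and with $\alpha=0$ the lemma gives $1=A(M)\le 2\rho_t\,L_{\min}\,e^{2\rho_t C}$, so the lower bound on the boundary lengths degenerates unless $\rho_t$ is bounded above on $[0,T)$. You never control this quantity. The paper does, and by exactly the ingredients you already have in hand: since $R\ge 0$ is preserved, the conformal factor satisfies $u=\exp\bigl(\int_0^t(r-R)\,d\tau\bigr)\le\exp\bigl(\int_0^T r\,dt\bigr)<\infty$, so the radial width (the diameter, essentially) stays bounded on any finite time interval, with a bound depending on $T$ --- which is all the lemma needs. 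Add that observation and your argument is complete; as written, the invocation of Lemma \ref{area1} is not yet justified.
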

\begin{proof}
First notice that the conformal factor at any point is given by
\[
u\left(x,t\right)=\exp\left(\int_0^t r-R\left(x,t\right)\,dt\right),
\]
so if the conclusion is false, by Lemma \ref{area1} we must have that the area of the surface approaches 0.
Indeed, the diameter remains bounded, since $R\geq 0$, and by the previous lemma $\int_0^t r\,d\tau<\infty$ for 
any finite $t$.
This would contradict the fact that the normalised flow keeps the area constant.
\end{proof}

 We employ now the notation
 \[
 \left\|\nabla\psi\left(t\right)\right\|_{\infty}=\max_{p\in M}\left\|\nabla\psi\left(p,t\right)\right\|,
 \]
where $\left\|\nabla\psi\left(p,t\right)\right\|$ represents the norm of $\nabla\psi\left(p,t\right)$ with respect to $g\left(t\right)$.
 Then we have that
 \begin{lemma}
\label{lemmapsi}
 We have that for any $T<\infty$
 \[
 \int_0^T \left\|\nabla\psi\left(t\right)\right\|_{\infty}\,dt<\infty.
 \]
 \end{lemma}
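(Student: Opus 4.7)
The plan is to apply elliptic regularity to the Neumann boundary value problem satisfied by $\psi$, obtaining a pointwise in $t$ bound for $\|\nabla\psi(t)\|_\infty$ in terms of $|r'(t)|$ and $\|k_g R\|_{L^\infty(\partial M)}$, and then to integrate in time using integral bounds on $r'$ and on $R|_{\partial M}$.

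First, by differentiating $r=\int_M R\,dA_g$, invoking the interior evolution equation of Lemma \ref{evolutioneqn}, the divergence theorem, and the boundary condition $\partial_\eta R = k_g R$, we obtain
\[
r'(t) = \int_{\partial M} k_g R\,ds_g \leq 0,
\]
since $R\geq 0$ and $k_g\leq 0$ are preserved by the flow (the latter because $k_g=\gamma/\sqrt{\phi}$ with $\gamma=k_{g_0}\leq 0$). Hence
\[
\int_0^T |r'(t)|\,dt = r(0)-r(T) \leq r(0) < \infty.
\]
Moreover, Gauss--Bonnet for the cylinder gives $\int_M \tilde R\,dA_{\tilde g} = -2\int_{\partial M} k_{\tilde g}\,ds_{\tilde g}\geq 0$, so $\tilde A$ is nonincreasing in $\tilde t$, and hence $\phi=1/\tilde A$ is bounded below on $[0,T]$, so that $|k_g|=|\gamma|/\sqrt{\phi}$ is uniformly bounded. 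Combined with Lemma \ref{infiniteintegral}, this yields $\int_0^T \|k_g R\|_{L^\infty(\partial M)}\,dt < \infty$.

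Next, for each fixed $t$ we split $\psi = \psi_1+\psi_2$, where $\psi_2=-r'(t)\,w$ with $w$ the unique (up to an additive constant) solution of $\Delta w=1$, $\partial_\eta w = 1/L_{\partial M}$, and $\psi_1$ is the harmonic function solving the residual Neumann problem with data $\partial_\eta\psi_1 = -k_g R + r'(t)/L_{\partial M}$. Standard $W^{2,p}$ elliptic estimates for the Neumann Laplacian with $p>2$, followed by the Sobolev embedding $W^{1,p}\hookrightarrow C^0$, yield
\[
\|\nabla\psi(t)\|_\infty \leq C(g(t))\bigl(|r'(t)| + \|k_g R\|_{L^\infty(\partial M)}\bigr).
\]
Integrating in $t$ and invoking the bounds above completes the proof.

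The main technical obstacle is ensuring that the elliptic constant $C(g(t))$ is uniformly bounded on $[0,T]$. This reduces to uniform geometric control on $g(t)$ (area bounded below, diameter bounded, geodesic curvatures bounded), which is furnished by Lemmas \ref{parallel} and \ref{area1} together with the preserved conditions $R\geq 0$, $k_g\leq 0$, and the area normalization $A\equiv 1$.
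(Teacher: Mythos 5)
Your preparatory reductions are fine and agree with what the paper needs: $r'=\int_{\partial M}k_g R\,ds_g\le 0$ gives $\int_0^T|r'|\,dt<\infty$, and Lemma \ref{infiniteintegral} together with the bound $|k_g|=|k_{g_0}|\sqrt{\tilde{A}}\le |k_{g_0}|\sqrt{\tilde{A}(0)}$ gives $\int_0^T\max_{\partial M}|k_g R|\,dt<\infty$. The gap is in the elliptic step. First, a $W^{2,p}$ estimate for the Neumann problem requires the boundary datum in the trace space $W^{1-1/p,p}(\partial M)$ (or H\"older continuity, for Schauder theory), not merely in $L^\infty(\partial M)$: with only bounded Neumann data the gradient of the solution can blow up logarithmically (a step-function datum on a half-plane already does this), so the inequality $\left\|\nabla\psi(t)\right\|_\infty\le C(g(t))\bigl(|r'(t)|+\|k_gR\|_{L^\infty(\partial M)}\bigr)$ is not a standard elliptic estimate. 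In the rotationally symmetric setting the datum $-k_gR+r'/L_{\partial M}$ is constant on each boundary circle, so this particular objection could be repaired, but you never invoke that.

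The more serious problem is the uniformity of $C(g(t))$ on $[0,T]$, which is exactly where your argument becomes circular. Lemmas \ref{parallel} and \ref{area1} only give comparability of lengths and areas under $R\ge-\alpha$ and $|k_g|\le C$; they provide no upper bound on $R$, no control of the conformal factor, and no $C^1$ (or even uniformly bi-Lipschitz) control of $g(t)$ -- and an upper bound on $R$ over $[0,T]$ is precisely what this entire section is trying to prove, with Lemma \ref{lemmapsi} as an ingredient. So "uniform geometric control on $g(t)$" is not furnished by anything available at this stage, and elliptic constants for the Neumann Laplacian genuinely depend on more than area, diameter and boundary curvature. The paper avoids any metric-dependent constant: since $\Delta\psi=-r'(t)$ is spatially constant and $R\ge 0$ (hence $\mathrm{Ric}\ge 0$ in dimension two), Bochner's formula gives $\Delta|\nabla\psi|^2\ge 0$, the maximum principle places the maximum of $|\nabla\psi|$ on $\partial M$, and by the symmetry the tangential derivative vanishes there, so $\left\|\nabla\psi(t)\right\|_\infty=\max_{\partial M}|k_gR|$ with constant $1$ independent of $g(t)$; Lemma \ref{infiniteintegral} then gives the time-integrability, which is your step two. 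To salvage your route you would have to replace the elliptic-regularity estimate by such a metric-independent gradient bound (or first establish uniform control of $g(t)$, which you cannot do here without assuming the conclusion of the section).
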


For the proof we will use the following elementary result whose proof we leave to the interested reader.
 \begin{lemma}
\label{calculuslemma}
 Assume that $\beta_1$ and $\beta_2$ are continuous positive functions such that
 \[
 \int_0^T \max\left\{\beta_1,\beta_2\right\}\,d\tau=\infty,
 \]
 then there is an $j=1,2$ for which
 \[
 \int_0^T \beta_j\left(\tau\right)\,d\tau=\infty.
 \]
 \end{lemma}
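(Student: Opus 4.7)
The plan is to argue by contradiction, or more directly, to use the elementary pointwise bound that for any two nonnegative real numbers $a,b$ one has $\max\{a,b\}\leq a+b$. Applying this pointwise to $\beta_1(\tau)$ and $\beta_2(\tau)$ for every $\tau\in[0,T)$, integrating over $[0,T)$, and invoking monotonicity of the integral gives
\[
\infty=\int_0^T\max\{\beta_1,\beta_2\}\,d\tau\leq \int_0^T\beta_1(\tau)\,d\tau+\int_0^T\beta_2(\tau)\,d\tau.
\]
Thus the sum of the two integrals is infinite.

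From here I would simply observe that if both $\int_0^T\beta_1\,d\tau$ and $\int_0^T\beta_2\,d\tau$ were finite, their sum would be finite as well, contradicting the displayed inequality. Therefore at least one of the two integrals must diverge, which is exactly the conclusion with the corresponding $j\in\{1,2\}$.

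There is no serious obstacle here: the statement is a routine consequence of subadditivity of the integral together with the trivial inequality $\max\{a,b\}\leq a+b$ for positive numbers. The only thing to note is that continuity and positivity of $\beta_1,\beta_2$ ensure that the integrals are well defined (possibly equal to $+\infty$) as improper Riemann integrals, so the manipulations above are legitimate. One could equivalently use the sharper bound $\max\{a,b\}\leq a+b\leq 2\max\{a,b\}$ to see that $\int_0^T(\beta_1+\beta_2)\,d\tau$ and $\int_0^T\max\{\beta_1,\beta_2\}\,d\tau$ diverge simultaneously, but this refinement is not needed for the statement as given.
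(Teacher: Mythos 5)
Your argument is correct: the pointwise bound $\max\{a,b\}\leq a+b$ for positive numbers, combined with monotonicity and additivity of the integral, immediately forces at least one of the two integrals to diverge. The paper explicitly leaves the proof of this lemma to the reader, and your argument is the natural (and surely intended) one.
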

 
 \begin{proof}[Proof of Lemma \ref{lemmapsi}]
 Notice that by Bochner's formula $\left|\nabla \psi\right|^2$ is subharmonic, i.e.,
 \[
 \Delta \left|\nabla \psi\right|^2\geq 0.
 \]
 Hence, by our symmetry assumptions, which imply the same symmetry for $\psi$,
if we let $\beta_j$, $j=1,2$, to be $-k_g R$ in each of the components of the boundary,
then we have
 \[
 \left\|\nabla\psi\left(t\right)\right\|_{\infty}=\max\left\{\beta_1,\beta_2\right\}
 \]
 But if 
 \[
 \int_0^T \left\|\nabla\psi\left(t\right)\right\|_{\infty}\,dt=\infty,
 \]
 since $\left|k_g\right|$ remains uniformly bounded away from 0 in at least one boundary component
over any finite interval of time 
(notice that if $k_g\equiv 0$ there would be nothing to prove, as this case
is already considered in \cite{Brendle}), we must also have
 \[
 \int_0^T \hat{R}\left(t\right)\,dt=\infty,
 \]
where $\hat{R}$ is the maximum of the scalar curvature when restricted to the boundary, but this contradicts Lemma
\ref{infiniteintegral}.

 \end{proof}

Now we 
let 
\[
h:=\Delta_{g}f+\left| \nabla f \right|^{2},
\]
and compute an evolution equation.
\begin{theorem}
$h$ satisfies an evolution equation
\[
\left\{
\begin{array}{l}
\displaystyle\frac{\partial h}{\partial t}=\Delta h -2\left|M_{ij}\right|^2+rh-r'-2\left<\nabla \psi,\nabla f\right>\quad\mbox{in}\quad M\times\left(0,T\right)\\
\displaystyle\frac{\partial h}{\partial \eta}=k_gR \quad \mbox{on}\quad \partial M\times\left(0,T\right).
\end{array}
\right.
\]
\end{theorem}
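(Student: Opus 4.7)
The plan is to write $h = \Delta_g f + |\nabla f|^2 = (R-r) + |\nabla f|^2$ using Poisson's equation, and then differentiate each piece in time and identify the result.

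First I would handle the $(R-r)$ part: by Lemma \ref{evolutioneqn} together with $\Delta r = 0$,
\[
\frac{\partial}{\partial t}(R-r) = \Delta(R-r) + R(R-r) - r'.
\]
Next, for $|\nabla f|^2 = g^{ij}\partial_i f\,\partial_j f$, I would use $\partial_t g^{ij} = -(r-R)g^{ij}$ (from the normalised flow) together with the evolution $\partial_t f = \Delta f + rf + \psi$ from the previous theorem to obtain
\[
\frac{\partial}{\partial t}|\nabla f|^2 = (R-r)|\nabla f|^2 + 2\langle \nabla\Delta f,\nabla f\rangle + 2r|\nabla f|^2 + 2\langle\nabla\psi,\nabla f\rangle.
\]
The key analytic input is Bochner's formula in dimension $2$, where $\mathrm{Ric}=\tfrac{R}{2}g$:
\[
\tfrac{1}{2}\Delta|\nabla f|^2 = |\mathrm{Hess}\,f|^2 + \langle\nabla\Delta f,\nabla f\rangle + \tfrac{R}{2}|\nabla f|^2.
\]
Solving this for $\langle\nabla\Delta f,\nabla f\rangle$ and substituting, the $R|\nabla f|^2$ terms telescope and a $\Delta|\nabla f|^2$ appears, giving a clean expression in terms of $|\mathrm{Hess}\,f|^2$.

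Now I would use the pointwise identity in $2$ dimensions
\[
|\mathrm{Hess}\,f|^2 = |M_{ij}|^2 + \tfrac{1}{2}(\Delta f)^2,
\]
where $M_{ij}=\mathrm{Hess}_{ij} f - \tfrac{1}{2}(\Delta f)g_{ij}$ is the traceless Hessian. Replacing $\Delta f = R-r$, adding the two evolutions, and grouping so that $rh = r(R-r)+r|\nabla f|^2$ absorbs two of the scalar terms, the remaining piece $R(R-r) - (\Delta f)^2 = 0$ cancels, leaving exactly
\[
\frac{\partial h}{\partial t} = \Delta h - 2|M_{ij}|^2 + rh - r' - 2\langle\nabla\psi,\nabla f\rangle,
\]
as claimed. (The main bookkeeping obstacle is making sure the $R(R-r)$, $r|\nabla f|^2$ and $(\Delta f)^2$ contributions combine into $rh$ modulo $-2|M_{ij}|^2$; this is where the 2-dimensional Hessian decomposition is essential.)

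For the boundary condition, I would write $\partial h/\partial \eta = \partial R/\partial\eta + \partial|\nabla f|^2/\partial\eta$. The first term is $k_g R$ by Lemma \ref{evolutioneqn}. For the second, using the Neumann condition $\partial f/\partial\eta = 0$ from \eqref{Poisson}, the gradient $\nabla f$ is tangential at $\partial M$; under the symmetry assumption $g_0 = dr^2 + f(\theta)^2 d\theta^2$ (preserved by the flow), $\psi$ and hence $f$ itself share this symmetry, so $\nabla f$ has no component transverse to $\partial M$ and $\partial|\nabla f|^2/\partial\eta$ vanishes on $\partial M$. This yields the stated Neumann condition $\partial h/\partial\eta = k_g R$.
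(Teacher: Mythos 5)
Your overall route is the right one: the paper gives no computation here (it simply defers to \cite{Cortissoz}), and what underlies the cited result is precisely the Hamilton-style potential computation you sketch --- write $h=(R-r)+|\nabla f|^2$, differentiate using the curvature evolution and $\partial_t f=\Delta f+rf+\psi$, apply Bochner with $\mathrm{Ric}=\tfrac{R}{2}g$, and use the two-dimensional decomposition $|\mathrm{Hess}\,f|^2=|M_{ij}|^2+\tfrac12(\Delta f)^2$. Two points of bookkeeping, however, are off as written. First, the cancellation you invoke is misstated: $R(R-r)-(\Delta f)^2$ is not zero, it equals $r(R-r)=r\,\Delta f$; the correct grouping is to split $R(R-r)=r(R-r)+(R-r)^2$, observe $(R-r)^2-(\Delta f)^2=0$, and then $r(R-r)+r|\nabla f|^2=rh$. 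Second, with the paper's convention $\partial_t f=\Delta f+rf+\psi$, your own displayed evolution of $|\nabla f|^2$ carries $+2\langle\nabla\psi,\nabla f\rangle$, and the Bochner substitution does not touch that term, so the computation as you set it up produces $+2\langle\nabla\psi,\nabla f\rangle$ rather than the $-2\langle\nabla\psi,\nabla f\rangle$ you assert at the end; you should either flip the sign convention for $\psi$ or flag the discrepancy (it is harmless for the subsequent estimates, which only use $\|\nabla\psi\|_\infty$).

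The boundary condition is where there is a genuine gap. From ``$\nabla f$ is tangential along $\partial M$'' it does not follow that $\partial|\nabla f|^2/\partial\eta=0$: one has $\partial_\eta|\nabla f|^2=2\,\mathrm{Hess}\,f(\eta,\nabla f)$, and for a tangential gradient this equals, via the shape operator of $\partial M$, a term of the form $\pm 2k_g|\nabla f|^2$, which has no reason to vanish when $k_g\neq 0$. (Your description is also internally inconsistent: under the rotational symmetry the potential depends only on the axial coordinate, so its gradient is \emph{normal} to the boundary parallels, not tangential.) The correct repair, available in the symmetric setting of this section, is that $\nabla f$ vanishes identically on $\partial M$: rotational invariance makes $f$ constant on each parallel, killing the tangential derivative, while the Neumann condition in \eqref{Poisson} kills the normal derivative; hence $\partial_\eta|\nabla f|^2=2\,\mathrm{Hess}\,f(\eta,\nabla f)=0$ and $\partial_\eta h=\partial_\eta R=k_gR$ by Lemma \ref{evolutioneqn}. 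Without the symmetry assumption the boundary identity picks up the extra $k_g|\nabla f|^2$ contribution, so the symmetry is doing real work here and should be cited as such.
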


The proof of this theorem can be found in \cite{Cortissoz}.
From the previous result we find that $h$ satisfies the differential inequality
\[
\frac{\partial h}{\partial t}\leq \Delta h+\left(r+2\left\|\nabla\psi\right\|_{\infty}\right)h-r'+2\left(r+\frac{1}{4}\right)\left\|\nabla\psi\right\|_{\infty}.
\]
Let 
\[
c\left(t\right)=\int_0^t\left(r+2\left\|\nabla\psi\right\|_{\infty}\right)\,dt,
\]
and define
\[
u=\exp\left(-c\left(t\right)\right)h.
\]
Recall that for any finite $T$, both $\displaystyle \int_0^Tr\,dt$ and $\displaystyle \int_0^T \left\|\nabla \psi\right\|_{\infty}\,dt$ are  finite. 
Then $u$ satisfies the differential inequality
\[
\frac{\partial u}{\partial t}\leq \Delta u-\exp\left(-c\left(t\right)\right)\left[r'-2\left\|\nabla\psi\right\|_{\infty}\left(r+\frac{1}{4}\right)\right].
\]
Finally let
\[
v=u+\int_0^t\exp\left(-c\left(t\right)\right)\left[r'-2\left\|\nabla\psi\right\|_{\infty}\left(r+\frac{1}{4}\right)\right]\,d\tau;
\]
then, since $k_g\leq 0$ and $R\geq 0$, $v$ is easily seen to satisfy
\[
\left\{
\begin{array}{l}
\dfrac{\partial v}{\partial t}\leq \Delta v \quad \mbox{in} \quad M\times\left(0,T\right)\\
\dfrac{\partial v}{\partial \eta}\leq 0 \quad \mbox{on}\quad \partial M \times\left(0,T\right).
\end{array}
\right.
\]
By the maximum principle, $v$ is uniformly bounded on $\left(0,T\right)$, and in consequence
so is $R$.

\section{Asymptotic behaviour}
\label{asymptoticbehavior}

In this section we remove the assumption of any symmetry. The results we shall present, unless otherwise stated,
 are valid as long as the initial 
data has nonnegative curvature. First, we recall a result from \cite{Murcia} (notice that in the statement presented here
the hypothesis of symmetry have been removed). 

\begin{theorem}
In the unnormalised flow, the total curvature satisfies the estimate
\begin{equation}
\label{lobachevski1}\int_ {M}\tilde{R}dA_{\tilde{g}} \leq \dfrac{c}{\tilde{t}}.
\end{equation}
\end{theorem}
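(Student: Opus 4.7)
The plan is to exploit two facts about the quantity $I(\tilde t):=\int_M\tilde R\,dA_{\tilde g}$: it is monotone nonincreasing, and its integral in time is bounded by the initial area. Combined, these give the asserted $1/\tilde t$ decay.

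First I would differentiate $I$. Using the unnormalised evolution equation $\partial_{\tilde t}\tilde R=\Delta_{\tilde g}\tilde R+\tilde R^{2}$, together with $\partial_{\tilde t}\,dA_{\tilde g}=-\tilde R\,dA_{\tilde g}$ and the boundary identity $\partial\tilde R/\partial\eta=k_{\tilde g}\tilde R$ (the unnormalised counterpart of Lemma \ref{evolutioneqn}, which the proof of that lemma points to), one gets
\[
\frac{dI}{d\tilde t}=\int_M\bigl(\Delta_{\tilde g}\tilde R+\tilde R^{2}-\tilde R^{2}\bigr)\,dA_{\tilde g}=\int_{\partial M}\frac{\partial\tilde R}{\partial\eta}\,ds_{\tilde g}=\int_{\partial M}k_{\tilde g}\tilde R\,ds_{\tilde g}.
\]
Since the boundary condition in (\ref{Ricciunnormalised}) freezes $k_{\tilde g}\equiv\gamma=k_{g_0}\leq 0$ for all time, and since nonnegativity of the scalar curvature is preserved (as remarked after Lemma \ref{evolutioneqn}; by scaling invariance the same holds for $\tilde R$), the right-hand side is nonpositive, so $I$ is nonincreasing.

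Second, from $\partial_{\tilde t}\,dA_{\tilde g}=-\tilde R\,dA_{\tilde g}$ we have $A'(\tilde t)=-I(\tilde t)$, so integrating and using $A(\tilde t)\geq 0$ gives
\[
\int_0^{\tilde t}I(\tau)\,d\tau=A(0)-A(\tilde t)\leq A(0).
\]
Combining with monotonicity of $I$ yields $\tilde t\,I(\tilde t)\leq\int_0^{\tilde t}I(\tau)\,d\tau\leq A(0)$, which is the stated bound with $c=A(0)$.

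The only substantive step is the boundary-term computation: one needs the sign to come out right, and this hinges precisely on the preservation of $\tilde R\geq 0$ and the preservation of $k_{\tilde g}\leq 0$ under the unnormalised flow. Everything after that is elementary: a monotone nonnegative function whose time integral is bounded must decay at rate $1/\tilde t$.
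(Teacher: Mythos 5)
Your proof is correct, and its second half is genuinely simpler than the paper's. You share the paper's starting point: the identity $\frac{d}{d\tilde t}\int_M\tilde R\,dA_{\tilde g}=\int_{\partial M}k_{\tilde g}\tilde R\,ds_{\tilde g}$ (which the paper states without the divergence-theorem derivation you supply) and the relation $\tilde A'=-\int_M\tilde R\,dA_{\tilde g}$. From there the paper divides by $\int_{\partial M}k_{\tilde g}\,ds_{\tilde g}$ to form the boundary average $r_{\partial}$, integrates the resulting ODE to write $\int_M\tilde R\,dA_{\tilde g}=c_1e^{-\int_0^{\tilde t}r_{\partial}}$, and then runs an integration-by-parts argument on $-\tilde A'h=c_1$ with $h$ nondecreasing; you instead observe directly that $I(\tilde t)=\int_M\tilde R\,dA_{\tilde g}$ is nonincreasing (since $k_{\tilde g}\le 0$ is frozen by the boundary condition and $\tilde R\ge 0$ is preserved) while $\int_0^{\tilde t}I\,d\tau=\tilde A(0)-\tilde A(\tilde t)\le\tilde A(0)$, so $\tilde t\,I(\tilde t)\le\tilde A(0)$. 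The two arguments rest on the same two facts --- the paper's requirement $h'\ge 0$, i.e.\ $r_{\partial}\ge 0$, is exactly your monotonicity of $I$ --- but your packaging is more elementary, makes the constant $c=\tilde A(0)$ explicit, makes visible where the hypotheses $R_{g_0}\ge 0$ and $k_{g_0}\le 0$ enter (in the paper these appear only implicitly, cf.\ the remark after Theorem \ref{thmblow} that the proof needs $\int R\,dA_g>0$), and avoids the division by $\int_{\partial M}k_{\tilde g}\,ds_{\tilde g}$, so it does not need the total curvature to stay strictly positive.
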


\begin{proof}
As said before, the quantities with a tilde refer to the unnormalised flow (for instance $\tilde{A}$ is
the area of the surface with respect to the metric $\tilde{g}$). We start then by calculating as follows:
\begin{eqnarray*}
\label{lobachevski}\left( \int_ {M}\tilde{R}dA_{\tilde{g}} \right)_{\tilde{t}}=\int_{\partial M} k_{\tilde{g}}\tilde{R} ds_{\tilde{g}} &=&
 \left( \int_{\partial M} k_{\tilde{g}} ds_{\tilde{g}}\right) \dfrac{\int_{\partial M} k_{\tilde{g}}\tilde{R} ds_{\tilde{g}} }{\int_{\partial M} k_{\tilde{g}} ds_{\tilde{g}} },
\end{eqnarray*}
writing
\[
r_{\partial}=\dfrac{\int_{\partial M} k_{\tilde{g}}\tilde{R} ds_{\tilde{g}} }{\int_{\partial M} k_{\tilde{g}} ds_{\tilde{g}} },
\]
by the Gauss-Bonnet theorem, we obtain
\begin{eqnarray}
\left( \int_ {M}\tilde{R}dA_{\tilde{g}} \right)_{\tilde{t}}&=& \left(2\int_{\partial M} k_{\tilde{g}} ds_{\tilde{g}}\right) r_{\partial}
=- \left( \int_ {M}\tilde{R}dA_{\tilde{g}} \right) r_{\partial}. \notag
\end{eqnarray}

Using that $\tilde{A}'\left(\tilde{t}\right)=-\displaystyle \int_M \tilde{R}\,dA_{\tilde{g}}$, and
integrating the previous identity, we obtain for some constant $c_{1}>0$ independent of time $\tilde{t}$,

\begin{equation*}
-\tilde{A}'(\tilde{t})= \int_ {M}\tilde{R}dA_{\tilde{g}}  = c_{1}e^{-\int^{\tilde{t}}_{0} r_{\partial}(\sigma) d \sigma}.
\end{equation*}

As we can write  $-\tilde{A}'(\tilde{t})h(\tilde{t})=c_{1}$, where $h(\tilde{t})=e^{\int^{\tilde{t}}_{0} r_{\partial}(\sigma) d \sigma}$, 
we proceed as in \cite{Murcia}, to obtain
\begin{eqnarray*}
c_{1}\tilde{t}=-\int^{\tilde{t}}_{0}\tilde{A}'(\sigma)h(\sigma) d \sigma &=&
-\tilde{A}(\tilde{t})h(\tilde{t})+\tilde{A}(0)+\int^{\tilde{t}}_{0}\tilde{A}(\sigma)h'(\sigma) d \sigma\\
&\leq&-\tilde{A}(\tilde{t})h(\tilde{t})+\tilde{A}(0)+\tilde{A}(0)\int^{\tilde{t}}_{0}h'(\sigma) d \sigma \\
&=&-\tilde{A}(\tilde{t})h(\tilde{t})+\tilde{A}(0)+\tilde{A}(0)h(\tilde{t})-\tilde{A}(0) \\
&=&h(\tilde{t})\left( \tilde{A}(0)-\tilde{A}(\tilde{t})\right) \\
&\leq&h(\tilde{t})\tilde{A}(0).
\end{eqnarray*}
As a consequence, we arrive at estimate (\ref{lobachevski1}).

\end{proof}

The previous theorem has the following consequence for the behavior of the total curvature in the
normalised flow.

\begin{theorem}
\label{thmnormalised}
Under normalised Ricci flow, the total scalar curvature satisfies
\begin{equation}
\int_{M}R \,d A_{g} \leq \dfrac{c}{\log\left(t+1\right)},
\end{equation}
for some positive constant $c$ independent of time $t$.
\end{theorem}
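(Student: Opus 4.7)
I would exploit the scale-invariance of the total curvature together with the preceding Lobachevski-type estimate, then bound the time rescaling factor $\tilde t(t)$ from below.

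First, observe that under the conformal change $g = \phi\tilde g$ the total curvature is invariant: $R\,dA_g = (\tilde R/\phi)(\phi\,dA_{\tilde g}) = \tilde R\,dA_{\tilde g}$. Hence $\int_M R\,dA_g(t) = \int_M \tilde R\,dA_{\tilde g}(\tilde t(t))$, and the previous theorem gives
\[
\int_M R\,dA_g \leq \frac{c}{\tilde t(t)},
\]
reducing the problem to showing $\tilde t(t) \geq c'\log(1+t)$.

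To analyse $\tilde t(t)$ in terms of $t$, I would use $d\tilde t/dt = \tilde A$ and $d\log\tilde A/dt = -r(t)$; the latter follows from $d\tilde A/d\tilde t = -\int \tilde R\,dA_{\tilde g}$, scale-invariance of the total curvature, and the chain rule. These integrate to
\[
\tilde A(t) = \tilde A(0)\,e^{-R(t)}, \qquad \tilde t(t) = \tilde A(0)\int_0^t e^{-R(s)}\,ds,
\]
where $R(t) := \int_0^t r(s)\,ds$. The goal is therefore to show $\int_0^t e^{-R(s)}\,ds \gtrsim \log(1+t)$, which would follow from a uniform bound $R(t) \leq \log(1+t) + C$.

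The heart of the argument is establishing this bound on $R(t)$. Because $R$ is nondecreasing (since $r\geq 0$ follows from $k_{g_0}\leq 0$ and the Gauss--Bonnet theorem), the integrand $e^{-R(s)}$ is nonincreasing, so $\tilde t(t) \geq \tilde A(0)\,t\,e^{-R(t)}$. Substituting into $r(t)\tilde t(t) \leq c$ yields the differential inequality
\[
-\frac{d}{dt}\bigl(e^{-R(t)}\bigr) = r(t)\,e^{-R(t)} \leq \frac{c}{\tilde A(0)\,t}.
\]
Integrating this inequality iteratively, and combining with the longtime existence of the unnormalised flow (which is equivalent to $\int_0^\infty e^{-R(s)}\,ds = \infty$ and therefore rules out faster-than-logarithmic growth of $R$), one obtains the required uniform bound $R(t) \leq \log(1+t) + C$. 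From this, $e^{-R(t)} \geq c''/(1+t)$, so $\tilde t(t) \geq c'\log(1+t)$, and the theorem follows. The main obstacle is making the bootstrap argument rigorous, since the differential inequality by itself only controls $R$ on bounded intervals of $t$; the longtime existence of the unnormalised flow is essential to globalise the local control to the desired uniform logarithmic bound.
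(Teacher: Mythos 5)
Your setup and overall route are sound and essentially the paper's: you use scale invariance of the total curvature together with the estimate $\int_M\tilde{R}\,dA_{\tilde{g}}\leq c/\tilde{t}$, and reduce the theorem to the inequality $\tilde{t}(t)\geq c'\log(1+t)$, which is exactly the paper's inequality (\ref{surprisinginequality}). Your identities $d\tilde{t}/dt=\tilde{A}$, $\frac{d}{dt}\log\tilde{A}=-r$, the formula $\tilde{t}(t)=\tilde{A}(0)\int_0^t e^{-\mathcal{R}(s)}\,ds$ with $\mathcal{R}(t)=\int_0^t r(s)\,ds$, and the derived inequality $-\frac{d}{dt}e^{-\mathcal{R}(t)}=r(t)e^{-\mathcal{R}(t)}\leq c/(\tilde{A}(0)\,t)$ are all correct.

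The gap is precisely the step you flag as ``the main obstacle'': the bound $\mathcal{R}(t)\leq\log(1+t)+C$ does not follow from the two ingredients you invoke, so the proposal stops short of a proof. Indeed, take $u(t)=e^{-\mathcal{R}(t)}$ behaving like $\frac{1}{t\log t}$ for large $t$ (adjusted near $t=0$ so that $u(0)=1$ and $u$ is decreasing): then $-u'\simeq\frac{1}{t^{2}\log t}\leq c/t$, one has $\int_0^\infty u\,ds=\infty$ (so your ``longtime existence'' condition holds), and even the stronger constraint $r(t)\tilde{t}(t)\leq c$ is satisfied, since $r\simeq 1/t$ while $\tilde{t}\simeq\tilde{A}(0)\log\log t$; nevertheless $\mathcal{R}(t)=\log t+\log\log t$ eventually exceeds $\log(1+t)+C$ for every $C$, and your intermediate goal $\tilde{t}\geq c'\log(1+t)$ fails for this profile as well. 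So no amount of ``iterative integration'' of the differential inequality, even combined with non-integrability of $e^{-\mathcal{R}}$, can close the argument; additional input is needed. There is also a circularity problem: in the paper, longtime existence of the unnormalised flow is Corollary \ref{longtimeunnormalised}, \emph{deduced from} the present theorem via (\ref{surprisinginequality}), and it is not a soft fact (on the disk with $R_{g_0}>0$, $k_{g_0}\leq 0$ the unnormalised flow becomes singular in finite time), so you cannot assume it here without an independent proof. The paper's own derivation of (\ref{surprisinginequality}) proceeds differently at exactly this point: after normalising so that $-\tilde{A}'\leq 1$ (harmless, by the estimate $-\tilde{A}'\leq c/\tilde{t}$), it integrates $-\tilde{A}'/\tilde{A}\leq 1/\tilde{A}=dt/d\tilde{t}$ to get $\tilde{A}(0)/\tilde{A}(\tilde{t})\leq e^{t}$ and then integrates once more in $\tilde{t}$ to compare $t$ with $e^{\tilde{t}}-1$, never appealing to longtime existence of the unnormalised flow or to a pointwise-in-time bound on $\int_0^t r$.
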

\begin{proof}
From the previous lemma we have that in the unnormalised flow the total scalar
curvature satisfies the estimate
\[
\int_{M}\tilde{R} d A_{\tilde{g}}\leq \dfrac{c}{\tilde{t}},
\]
for some positive constant $c$ independent of time $\tilde{t}$.
Since under the Ricci flow the area $\tilde{A}\left(\tilde{t}\right):=A_{\tilde{g}(\tilde{t})}$ of the surface is decreasing and its derivative satisfies  
$\tilde{A}'(\tilde{t})=-\int_{M}\tilde{R}\,dA_{\tilde{g}(\tilde{t})}$,  we can assume, without loss of generality and to simplify the
estimates below,  that $-\tilde{A}'(\tilde{t})\leq 1$. So,  
\begin{equation*}
-\dfrac{\tilde{A}'(\tilde{t})}{\tilde{A}(\tilde{t})}\leq \dfrac{1}{\tilde{A}(\tilde{t})}.
\end{equation*}
Intregrating with respect to $\tilde{t}$ we obtain
\begin{equation*}
\log \left(\dfrac{\tilde{A}(0)}{\tilde{A}(\tilde{t})}\right) \leq \int^{\tilde{t}}_{0} \dfrac{1}{\tilde{A}(\sigma)} d \sigma=t,
\end{equation*}
which implies
\begin{equation*}
\dfrac{\tilde{A}(0)}{\tilde{A}(\tilde{t})} \leq e^{t}.
\end{equation*}
Now, we integrate the previous expresion with respect to time $\tilde{t}$. This yields

\begin{eqnarray*}
\tilde{A}\left(0\right)t&=&
\tilde{A}(0)\int_0^{\tilde{t}} \frac{1}{\tilde{A}(\tilde{t})}\,d\tilde{t}\\
&\leq& \int^{\tilde{t}}_{0}e^{t}d \tilde{t}
= \int^{\tilde{t}}_{0}e^{t}\tilde{A}(\tilde{t})d t \\
& \leq &\tilde{A}(0) \int^{\tilde{t}}_{0}e^{t}d t=A_{0}\left(e^{\tilde{t}}-1\right).
\end{eqnarray*}
Hence we get that
\begin{equation}
\label{surprisinginequality}
\dfrac{1}{\tilde{t}}\leq \dfrac{1}{\log\left(t+1 \right)}.
\end{equation}
Since, the total scalar curvature is scaling-invariant, i.e $\int_{M}RdA_{g(t)}=\int_{M}\tilde{R}dA_{\tilde{g}(\tilde{t})}$, then the result follows.
\end{proof}

The previous result and its proof have two interesting consequences. The first one is that the unnormalised flow must exist for all time: otherwise inequality
(\ref{surprisinginequality}) would not be valid. On the other hand,
we must have that the minimum of the scalar curvature $R_{min}\left(t \right)\rightarrow 0$ as $t \rightarrow \infty$.
We state the first of these facts in the following corollary.
\begin{corollary}
\label{longtimeunnormalised}
If the normalised flow (\ref{Riccinormalised}) exists for all time, then the unnormalised Ricci flow (\ref{Ricciunnormalised}) also exists for all time.
As a consequence, if the initial data is of the form
\[
g_0=dr^2+f\left(\theta\right)^2d\theta^2,
\]
the unnormalised flow exists for all time.
\end{corollary}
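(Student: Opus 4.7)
The plan is to leverage the monotone comparison $\log(1+t)\leq\tilde{t}$ between the normalised time $t$ and the unnormalised time $\tilde{t}$, which is precisely what the computation leading to (\ref{surprisinginequality}) in the proof of Theorem \ref{thmnormalised} establishes. Once this inequality is in hand, both halves of the corollary are short.

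For the first statement I would argue by contrapositive. Assume the unnormalised flow has finite maximal existence time $\tilde{T}<\infty$. Then $\tilde{t}<\tilde{T}$ throughout the flow, so the inequality forces $t\leq e^{\tilde{T}}-1<\infty$; in particular the normalised time parameter $t(\tilde{t})=\int_0^{\tilde{t}}\phi(\tau)\,d\tau$ cannot reach $+\infty$. This contradicts the hypothesis that the normalised flow is defined on $[0,\infty)$, and therefore $\tilde{T}=\infty$.

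For the second statement I would invoke the long-time existence result for the normalised flow established in Section \ref{longtimeexistence}: under the symmetry assumption $g_0=dr^2+f(\theta)^2\,d\theta^2$, combined with the standing hypotheses $R_{g_0}\geq 0$ and $k_{g_0}\leq 0$, the scalar curvature is bounded on every finite interval, whence the normalised flow exists for all $t\geq 0$. Applying the first part then yields long-time existence of the unnormalised flow.

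The only delicate point is to verify that the inequality $\log(1+t)\leq\tilde{t}$ really does obstruct a finite-time breakdown of the unnormalised flow whenever the normalised flow extends to $t=+\infty$; this rests on the fact that $t(\tilde{t})=\int_0^{\tilde{t}}\phi(\tau)\,d\tau$ is a strictly increasing absolutely continuous function of $\tilde{t}$, since $\phi>0$. Beyond this minor bookkeeping, the argument is essentially a rereading of the estimate already derived in Theorem \ref{thmnormalised}, so no substantial new obstacle arises.
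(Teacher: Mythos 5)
Your proposal is correct and follows essentially the same route as the paper: the paper also deduces the first claim directly from inequality (\ref{surprisinginequality}), i.e. $\log(1+t)\leq\tilde{t}$, which would fail if the unnormalised flow ended at a finite time $\tilde{T}$ while the normalised time $t$ tends to infinity, and obtains the second claim by combining this with the long-time existence result of Section \ref{longtimeexistence} for symmetric initial data.
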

The informed reader must compare this result with the case of closed surfaces of Euler characteristic 0: both the
normalised and unnormalised flow coincide, so both exist for all time. 

As a consequence of Corollary \ref{longtimeunnormalised}, the curvature in the 
unnormalised flow (at least when assuming symmetric initial data, see Section \ref{moresymmetries} below) remains bounded on any finite interval of time; but 
it does not remain uniformly bounded, as the following
result shows. 
\begin{theorem}
\label{thmblow}
If the length of the boundary in the normalised flow
remains bounded below, and $k_g<0$ and is locally constant, then there is constant $c_1>0$ such that $\int_{M}\tilde{R}^{2}d A_{\tilde{g}} \geq c_1$ and as a consequence,
there is a constant $c_2>0$ such that
\begin{equation*}
\tilde{R}_{\max}(\tilde{t}) \geq  c_2 \tilde{t}.
\end{equation*}
\end{theorem}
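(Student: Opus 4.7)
The plan is to combine Gauss--Bonnet on the cylinder with two applications of Cauchy--Schwarz and the total curvature estimate (\ref{lobachevski1}) of the previous theorem. Throughout I use that $\tilde R\geq 0$ is preserved by the flow (the standing assumption of this section) and that the boundary condition in (\ref{Ricciunnormalised}) keeps $k_{\tilde g}$ frozen at its initial, locally constant, value $\gamma$. First I would invoke Gauss--Bonnet: since $\chi(M)=0$, writing $\partial M=C_1\cup C_2$ and $\gamma_i:=k_{\tilde g}|_{C_i}<0$, it yields
\[
\int_M \tilde R\,dA_{\tilde g}=-2\int_{\partial M}k_{\tilde g}\,ds_{\tilde g}=-2\bigl(\gamma_1 L_1(\tilde t)+\gamma_2 L_2(\tilde t)\bigr),
\]
where $L_i(\tilde t)$ is the length of $C_i$ in the unnormalised metric.

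Next I would translate the normalised--length hypothesis into an unnormalised bound. Since $g=\phi\tilde g$ with the area normalisation forcing $\phi(\tilde t)=1/A_{\tilde g}(\tilde t)$, lengths scale as $L_i^{\mathrm{norm}}=L_i(\tilde t)/\sqrt{A_{\tilde g}(\tilde t)}$, so the hypothesis delivers $L_i(\tilde t)\geq\ell_0\sqrt{A_{\tilde g}(\tilde t)}$ for some $\ell_0>0$. Combined with the display above,
\[
\int_M\tilde R\,dA_{\tilde g}\geq C_0\sqrt{A_{\tilde g}(\tilde t)},\qquad C_0:=2\bigl(|\gamma_1|+|\gamma_2|\bigr)\ell_0>0,
\]
and Cauchy--Schwarz in the form $\bigl(\int\tilde R\,dA_{\tilde g}\bigr)^2\leq A_{\tilde g}\int\tilde R^2\,dA_{\tilde g}$ promotes this to
\[
\int_M\tilde R^2\,dA_{\tilde g}\geq C_0^2=:c_1,
\]
which is the first assertion of the theorem.

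For the pointwise lower bound I would apply Cauchy--Schwarz in the opposite direction, using nonnegativity of $\tilde R$ together with the decay estimate (\ref{lobachevski1}):
\[
c_1\leq \int_M\tilde R^2\,dA_{\tilde g}\leq \tilde R_{\max}(\tilde t)\int_M\tilde R\,dA_{\tilde g}\leq \tilde R_{\max}(\tilde t)\cdot\frac{c}{\tilde t},
\]
from which $\tilde R_{\max}(\tilde t)\geq (c_1/c)\,\tilde t$ follows at once. The step that most warrants care is the scaling argument above: the hypothesis lives in the normalised picture while both $\int_M\tilde R\,dA_{\tilde g}$ and $L_i(\tilde t)$ are unnormalised, and the bridge is precisely $\phi=1/A_{\tilde g}$. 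Once that conversion is in place, Gauss--Bonnet, Cauchy--Schwarz applied twice in opposite directions, and (\ref{lobachevski1}) do everything else.
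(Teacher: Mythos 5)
Your proposal is correct and follows essentially the same route as the paper: Gauss--Bonnet to express $\int_M\tilde R\,dA_{\tilde g}$ via the (frozen, locally constant) boundary geodesic curvature and the unnormalised boundary lengths, the scaling relation $l_g=l_{\tilde g}/\sqrt{A_{\tilde g}}$ to import the normalised-length hypothesis, Cauchy--Schwarz to get $\int_M\tilde R^2\,dA_{\tilde g}\geq c_1$, and then the chain $c_1\leq \tilde R_{\max}\int\tilde R\,dA_{\tilde g}\leq \tilde R_{\max}\,c/\tilde t$ using the previous theorem. The paper's proof is the same argument written with $-\tilde A'(\tilde t)=\int_M\tilde R\,dA_{\tilde g}$, so there is nothing substantive to add.
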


\begin{proof}
If we assume for the normalised Ricci flow that the length of the boundary components are bounded away from 0, then there is a constant $C>0$ such that
for all $t>0$
\begin{equation*}
\dfrac{1}{\sqrt{A_{\tilde{g}}}}l_{\tilde{g}}\left(\partial M\right)=l_{g}\left(\partial M\right)\geq C.
\end{equation*}
This inequality implies
\begin{equation*}
-\dfrac{\tilde{A}'(\tilde{t})}{\sqrt{\tilde{A}}}=\dfrac{\int_{M}\tilde{R}dA_{\tilde{g}}}{\sqrt{\tilde{A}}}=-cl_{g}\left(\partial M\right)\geq C,
\end{equation*}
where $c<0$ is the minimum of the geodesic curvature of the boundary.
On the other hand, by the Cauchy-Schwarz inequality we have
\begin{equation*}
-\tilde{A}'(\tilde{t})=\int_{M}\tilde{R}d A_{\tilde{g}} \leq \sqrt{\int_{M}\tilde{R}^{2}d A_{\tilde{g}}} \sqrt{\tilde{A}(\tilde{t})}.
\end{equation*}
Therefore, we obtain the inequality

\begin{equation*}
\sqrt{\int_{M}\tilde{R}^{2}d A_{\tilde{g}}} \geq \dfrac{-\tilde{A}'(\tilde{t})}{\sqrt{\tilde{A}}} \geq C.
\end{equation*}

To show that $\tilde{R}_{\max}(t)$ blows-up in infinite time, we use this lower bound as follows 

\begin{equation*}
 C^2 \leq  \int_{M}\tilde{R}^{2}d A_{\tilde{g}} \leq \tilde{R}_{\max}(t) \int_{M}\tilde{R} d A_{\tilde{g}} \leq \tilde{R}_{\max}(t) \dfrac{c}{\tilde{t}}.
\end{equation*}

\end{proof}

One result to be expected in geometric flows is that whenever there is convergence, this
convergence is exponential. In our case this is not true;
even though the curvature seems to be approaching 0 (at least it does it in an $L^1$ sense, and in some cases, as shown below, uniformly), 
it does not do it too fast, and
by this we mean exponentially fast,
as the following estimate shows.
\begin{theorem}
Assume that the initial data is of the form 
\[
g_0=dr^2+f\left(\theta\right)^2d\theta^2,
\]
satisfies $k_g<0$ in one of the boundary components and is locally constant,
and that $R_{\min}\left(t\right)$ is attained in both components of $\partial M$. Then, there is a constant $c_2>0$ such that
for the normalised flow holds that
\begin{equation*}
R_{\max}(t) \geq \dfrac{2}{t+c_2}.
\end{equation*}
\end{theorem}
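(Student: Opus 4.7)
The plan is to replace the hard-to-control pointwise quantity $R_{\max}(t)$ by the spatial average $r(t)=\int_{M}R\,dA_{g}$ (so $R_{\max}\ge r$), and to show that $r$ itself satisfies a scalar ODE of the form $r'\ge-r^{2}/2$, which yields the rate $2/(t+c_{2})$ by direct integration.

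First, I would use the preserved rotational symmetry of the warped-product initial data to note that $R$ and $k_{g}$ are both constant along each boundary circle $\partial M_{i}$. Combined with the hypothesis that $R_{\min}(t)$ is realised in both boundary components, this forces $R|_{\partial M_{i}}(t)=R_{\min}(t)$ for $i=1,2$. Writing $k_{i}(t)$ for the (constant) value of $k_{g}$ on $\partial M_{i}$ and $L_{i}(t)$ for its length, I then differentiate $r(t)=\int_{M}R\,dA_{g}$ using $A_{g}\equiv 1$, the area evolution $\partial_{t}dA_{g}=(r-R)\,dA_{g}$, and the equation of Lemma \ref{evolutioneqn}. The interior $R(R-r)$ terms cancel exactly, and the divergence theorem together with $\partial_{\eta}R=k_{g}R$ yields
\[
r'(t)=\int_{M}\Delta R\,dA_{g}=\int_{\partial M}k_{g}R\,ds_{g}=R_{\min}(t)\bigl(k_{1}L_{1}+k_{2}L_{2}\bigr).
\]
Since $\chi(M)=0$, Gauss--Bonnet gives $r=-2(k_{1}L_{1}+k_{2}L_{2})$, so the boundary integral closes back on $r$ itself and I obtain the identity
\[
r'(t)=-\tfrac{1}{2}R_{\min}(t)\,r(t).
\]

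Now because $R\ge 0$ is preserved and $A_{g}\equiv 1$, the minimum lies below the average, $R_{\min}(t)\le r(t)$. Plugging this in gives the scalar differential inequality $r'(t)\ge -\tfrac{1}{2}r(t)^{2}$, equivalently $(1/r)'(t)\le 1/2$. Integrating from $0$ to $t$ yields
\[
r(t)\ge\frac{2}{t+c_{2}},\qquad c_{2}:=\frac{2}{r(0)},
\]
and since $R_{\max}(t)\ge r(t)$ the theorem follows.

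The main obstacle, and the only non-mechanical step, is the reduction of $\int_{\partial M}k_{g}R\,ds_{g}$ to the single product $R_{\min}\cdot(k_{1}L_{1}+k_{2}L_{2})$: this is where the symmetry (making $R$ constant along each $\partial M_{i}$) and the hypothesis that $R_{\min}$ is realised on both components are both essential, and it is what allows Gauss--Bonnet to close the identity into an autonomous ODE for $r$. Once that closure is in place, the non-exponential $1/t$ rate falls out of the quadratic comparison, in the same spirit as the companion blow-up rate of Theorem \ref{thmblow}.
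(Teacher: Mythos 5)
Your proposal is correct, and at bottom it is the same computation as the paper's, just carried out in a different gauge. The paper works with the unnormalised flow: it notes $-\tilde{A}'=\int\tilde{R}\,dA_{\tilde{g}}$, computes $\tilde{A}''=-\int_{\partial M}k_{\tilde{g}}\tilde{R}\,ds_{\tilde{g}}=-\tilde{R}_{\min}(k_1\tilde{l}_1+k_2\tilde{l}_2)$ (using exactly your two ingredients: rotational symmetry making $\tilde{R}$ constant on each boundary circle, plus the hypothesis that $R_{\min}$ is attained on both components), closes the loop with Gauss--Bonnet to get $-\tilde{A}''/\tilde{A}'=\tfrac12\tilde{R}_{\min}$, and then combines this with $-\tilde{A}'\geq\tilde{R}_{\min}\tilde{A}$ and the time change $t=\int_0^{\tilde{t}}d\sigma/\tilde{A}(\sigma)$ to integrate $2\tilde{A}''/(\tilde{A}')^2$ and conclude $R_{\max}(t)\geq -\tilde{A}'(\tilde{t})\geq 2/(t+\epsilon)$. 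Since $r(t)=\int_M R\,dA_g=-\tilde{A}'(\tilde{t})$ and $R_{\min}=\tilde{A}\tilde{R}_{\min}$, your identity $r'=-\tfrac12 R_{\min}\,r$ in normalised time is precisely the paper's identity rewritten, and your comparison $R_{\min}\leq r$ is the paper's $-\tilde{A}'\geq\tilde{R}_{\min}\tilde{A}$. What your formulation buys is economy: a single autonomous Riccati inequality $r'\geq -\tfrac12 r^2$ integrated directly, with no unnormalised area or reparametrisation bookkeeping; what the paper's version buys is that it runs in parallel with the techniques of its Theorems 4.1 and 4.3 (area decay of the unnormalised flow), making the family of results uniform. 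One small point to make explicit in your write-up: you need $r(0)>0$ (so that $c_2=2/r(0)$ is defined) and $r(t)\geq 0$ along the flow; both follow from the standing assumptions $R_{g_0}\geq 0$ (preserved) and $k_g\leq 0$ with $k_g<0$ on one component via Gauss--Bonnet --- the paper is equally terse about this, invoking $R_{g_0}>0$ only at the very end.
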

\begin{proof}
We let $k_1$ and $k_2$ be the constant value of the geodesic curvature
in each connected component of the boundary, and let $\tilde{l}_1$ and
$\tilde{l}_2$ be the lengths of each component.
Now notice that, by the Gauss-Bonnet theorem,
\[
-\tilde{A}'=\int \tilde{R}\,d A_{\tilde{g}}=2\int_{\partial M}k_{\tilde{g}}\,d s_{\tilde{g}}=-2k_1\tilde{l}_1-2k_2\tilde{l}_2.
\]
On the other hand
\begin{eqnarray*}
\tilde{A}''&=&-\int \Delta \tilde{R}\,d A_{\tilde{g}}\\
&=&-\int k_{\tilde{g}}\tilde{R}\,d s_{\tilde{g}}
= -\tilde{R}_{\min}\left(t\right)\left(k_1\tilde{l}_1+k_2\tilde{l}_2\right),
\end{eqnarray*}
from which obtain
\[
-\dfrac{\tilde{A}''(\tilde{t})}{\tilde{A}'(\tilde{t})}= \frac{1}{2}\tilde{R}_{\min}.
\]
But
\[
-\tilde{A}'\geq \tilde{R}_{\min}\tilde{A},
\]
and hence
\begin{equation*}
-\dfrac{\tilde{A}'(\tilde{t})}{\tilde{A}(\tilde{t})} \geq -\dfrac{2\tilde{A}''(\tilde{t})}{\tilde{A}'(\tilde{t})}.
\end{equation*}
Then
\begin{equation*}
\dfrac{1}{\tilde{A}(\tilde{t})} \geq \dfrac{2\tilde{A}''(\tilde{t})}{(\tilde{A}'(\tilde{t}))^{2}},
\end{equation*}
and we can compute
\begin{equation*}
t=\int^{\tilde{t}}_{0}\dfrac{1}{\tilde{A}(\sigma)}d\sigma \geq 2\int^{\tilde{t}}_{0}\dfrac{\tilde{A}''(\sigma)}{(\tilde{A}'(\sigma))^{2}}
\,d\sigma=\frac{2}{\tilde{A}'\left(0\right)}-\dfrac{2}{\tilde{A}'(\tilde{t})}.
\end{equation*}
The assumption $R_{g_0}>0$, implies that for an $\epsilon>0$, $-\epsilon=\frac{2}{\tilde{A}'\left(0\right)}$, so
we have
\begin{equation*}
t+\epsilon \geq -\dfrac{2}{\tilde{A}'(\tilde{t})}.
\end{equation*}
which implies, via the fact that $\int \tilde{R}_{\tilde{g}}\,dA_{\tilde{g}}=\int R_g\,dA_{g}\leq R_{\max}\left(t\right)$, 
\begin{equation*}
R_{\max}(t) \geq -\tilde{A}'(\tilde{t}) \geq \dfrac{2}{t+\epsilon}.
\end{equation*}
\end{proof}
We must point out that examples of initial data so that $R_{\min}\left(t\right)$ is attained in both boundary components of $M$
are easy to construct.

\medskip
{\bf Remark. } Notice that in the proofs of Theorems \ref{thmnormalised} and \ref{thmblow} all that  is required is that $\int R_g\,dA_g$ remains positive on any
finite interval $\left[0,T\right]$, which is true as long as $k_g<0$ in any of the connected components of $\partial M$.

\subsection{Assuming more symmetries: refined asymptotic results.}
\label{moresymmetries}
If we assume 
additional symmetries on the initial metric, we can sharpen our results on the behavior of the curvature. Notice that so far we have been able to
prove that the scalar curvature remains bounded over finite time intervals. 
Let us recall some terminology used so far, we write
\[
M=\mathbb{S}^1\times\left[-\rho,\rho\right],
\] 
and we will call $\mathbb{S}^1\times\left\{0\right\}$ the middle parallel.

Now we shall assume that not only that the initial data is of the form
\[
g_0=dr^2+f\left(\theta\right)^2d\theta^2
\]
but also
that it is symmetric by reflection with respect to the middle parallel. We will
assume as well that the scalar curvature $R_{g_0}$ is decreasing as we move from the
middle parallel towards any of the boundary components. These properties of the
initial data are preserved under the Ricci flow as considered in this article, and we will say
in this case that the scalar curvature is decreasing from the middle. Examples 
where the metric satisfies the properties described above are easy to construct
(see Proposition 3 in \cite{Murcia}). 
The following results shows that
under these additional assumptions we can prove that the curvature is uniformly bounded above and even that it approaches 0 as
$t\rightarrow\infty$.

\begin{theorem} 
\label{subconvergingtozero}
If the scalar curvature of the initial data is decreasing from the middle, then there exists a sequence of times $t_k\rightarrow \infty$ such that
$R\left(t_k\right)\rightarrow 0$.
\end{theorem}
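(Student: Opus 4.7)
The plan is to argue by contradiction. Suppose no such subsequence exists, so $\liminf_{t\to\infty} R_{\max}(t) =: \delta > 0$ and $R_{\max}(t) \geq \delta/2$ for all $t$ sufficiently large. The goal is to contradict the total curvature decay $\int_M R\,dA_g \leq c/\log(1+t) \to 0$ of Theorem \ref{thmnormalised}. Under the symmetry and monotonicity hypotheses, both preserved by the normalised flow, $R(\cdot,t)$ may be regarded as an even, non-increasing function of $|r|$ attaining its maximum $R_{\max}(t)$ on the entire middle parallel $\mathbb{S}^1\times\{0\}$.

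The first step is a uniform control of the warping function $f(r,t)$. The boundary condition for the normalised flow reads $k_g = \gamma/\sqrt{\phi(t)} = \gamma\sqrt{\tilde{A}(\tilde{t})}$, and since $R\geq 0$ is preserved one has $\tilde{A}'(\tilde{t}) = -\int_M \tilde{R}\,dA_{\tilde{g}} \leq 0$, so $\tilde{A}$ is non-increasing and $|k_g|$ is bounded by a constant independent of $t$. Applying Lemma \ref{parallel} with $\alpha=0$ then yields a uniform constant $C'$ so that the lengths of any two parallels are comparable within a factor $C'$, and combined with the area normalisation $A_g(M)=1$ this produces a uniform positive lower bound $f(r,t)\geq c_1>0$ for all $r$ and $t$.

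The main step is then a uniform-in-time width estimate: one needs to find $r_0 > 0$, independent of $t$, such that $R_{\max}(t) \geq \delta/2$ forces $R(r,t) \geq \delta/4$ throughout $\{|r| \leq r_0\}$. Granted such an $r_0$, the area of that strip is at least $4\pi c_1 r_0 > 0$, so $\int_M R\,dA_g \geq \pi \delta c_1 r_0$ for arbitrarily large $t$, contradicting Theorem \ref{thmnormalised}. The route I would follow to produce the width estimate is to prove a uniform bound $|\partial_r R| \leq M$ on the radial derivative, starting from the evolution equation $\partial_t R = \Delta R + R(R-r)$ together with the boundary condition $\partial_\eta R = k_g R$, and exploiting the uniform bounds on $r$, $k_g$ and $f$ collected above via parabolic regularity adapted to the symmetric setting. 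Establishing this gradient bound uniformly on $[0,\infty)$, rather than merely on bounded time intervals, is the principal technical obstacle of the argument, and it is precisely what converts the integral decay of the total scalar curvature into the pointwise subsequential decay of $R_{\max}$.
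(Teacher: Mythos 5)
Your overall strategy (contradict the $L^1$ decay of Theorem \ref{thmnormalised} by showing a definite amount of curvature persists near the middle parallel) is reasonable in spirit, but the proof has a genuine gap exactly where you flag it: the uniform-in-time bound $\left|\partial_r R\right|\leq M$ is not a technicality one can defer to ``parabolic regularity.'' Interior parabolic estimates for $\partial_t R=\Delta R+R\left(R-r\right)$ give gradient control only in terms of a sup bound for $R$ on preceding parabolic cylinders, and at this stage of the paper no uniform-in-time bound on $R$ is available: the uniform bound is Theorem \ref{uniformlybounded}, which is proved \emph{after} this theorem and uses precisely the quantity $\int_0^{\infty}\left(R_{\max}-r\right)dt<\infty$ that the paper's proof of the present statement produces. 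So your argument is circular (or at best incomplete) at its central step. There are also secondary uniformity issues you pass over: the constant in Lemma \ref{parallel} is $e^{2\rho\left(\alpha\rho+C\right)}$ with $2\rho$ the width of the cylinder, so both your lower bound $f\geq c_1$ and the existence of a fixed strip $\left\{\left|r\right|\leq r_0\right\}\subset M$ require a uniform upper (and lower) control on the width, which you do not establish.

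The paper's proof avoids all of this and is elementary, with no contradiction argument and no gradient estimate. Since the curvature is decreasing from the middle, $R_{\min}$ sits on the boundary and $R_{\min}\leq r$, so the boundary lengths are non-decreasing under $\partial_t g=\left(r-R\right)g$ and hence bounded below; together with the Section 2 lemmas and $A_g\equiv 1$ this keeps the geometry (diameter, comparability of parallels) under control. The middle parallel, where $R=R_{\max}$, has length
\begin{equation*}
L\left(t\right)=L_0\,e^{\int_0^t\left(r-R_{\max}\right)d\tau},
\end{equation*}
which must stay bounded away from $0$ (otherwise the area would tend to $0$ by Lemma \ref{area1}, contradicting the normalisation). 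Hence $\int_0^{\infty}\left(R_{\max}-r\right)dt<\infty$, so there are times $t_k\to\infty$ with $R_{\max}\left(t_k\right)-r\left(t_k\right)\to 0$, and since $r\to 0$ by Theorem \ref{thmnormalised}, $R\left(t_k\right)\to 0$. If you want to salvage your approach, you would first need to prove the uniform bound on $R$ and on the width independently, which is substantially harder than the monotonicity argument above; I recommend reworking your proof along the paper's lines, where the integral bound on $R_{\max}-r$ is the real conclusion and the subsequential decay falls out of it.
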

\begin{proof}
Notice that being the minimum located at the boundary, there exists a $c>0$ such that the length of each boundary component is bounded at
least $c$ for all time. Hence, the diameter of the barrel must remain uniformly bounded by the results 
of Section 2. On the other hand the length of the middle parallel
behaves as
\[
L\left(t\right)=L_0e^{\int_0^tr-R_{\max}\,d\tau},
\]
and hence it is decreasing. But the length of the middle parallel must remain bounded away from 0; otherwise, the area of the barrel would go to 0.
Hence, $\int_0^t R_{\max}-r\,dt$ remains uniformly bounded (incidently, notice that
the integrand is positive), so for any $k>0$ there must exists a $t_k$ such that $R_{\max}\left(t_k\right)-r<\frac{1}{k}$. But we have
already shown that $r\rightarrow 0$ as $t\rightarrow \infty$, and this implies that $R\left(t_k\right)\rightarrow 0$.
\end{proof}

Now we prove that the curvature remains uniformly bounded.

\begin{theorem}
\label{uniformlybounded}
The scalar curvature remains uniformly bounded in the normalised flow.
\end{theorem}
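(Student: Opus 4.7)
The plan is to combine two facts from the proof of Theorem \ref{subconvergingtozero}. First, along a sequence $t_k\to\infty$ one has $R_{\max}(t_k)\to 0$. Second---and this is implicit in that proof---the integral
\[
K:=\int_{0}^{\infty}\bigl(R_{\max}(t)-r(t)\bigr)\,dt
\]
is finite. Indeed, the middle parallel has length evolving as an exponential with $\int(r-R_{\max})\,d\tau$ in the exponent, and this length is forced to stay bounded below in order to maintain the normalization $A_g(M)=1$ (via the comparisons of Lemma \ref{area1}); since $R_{\max}\geq r$, this yields $K<\infty$.

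Next, using the ``decreasing from the middle'' hypothesis, the spatial maximum of $R(\cdot,t)$ is attained on the interior curve $\{r=0\}$, so the boundary condition $\partial R/\partial\eta=k_g R$ of Lemma \ref{evolutioneqn} does not interfere, and the parabolic maximum principle applied to $\partial_t R=\Delta R+R(R-r)$ gives
\[
\dot R_{\max}(t)\leq R_{\max}(t)\bigl(R_{\max}(t)-r(t)\bigr)
\]
in the sense of forward difference quotients. Either $R_{\max}(t_0)=0$ for some $t_0$---in which case $R\equiv 0$ for all $t\geq t_0$ by the strong maximum principle applied to the nonnegative subsolution $R$, and the theorem is trivial---or $R_{\max}(t)>0$ for every $t$; in the latter case, dividing by $R_{\max}$ yields
\[
\frac{d}{dt}\log R_{\max}(t)\leq R_{\max}(t)-r(t).
\]

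Integrating this logarithmic inequality from $t_k$ to $t\geq t_k$ gives the propagation estimate
\[
R_{\max}(t)\leq R_{\max}(t_k)\,\exp\!\left(\int_{t_k}^{t}(R_{\max}-r)\,d\tau\right)\leq R_{\max}(t_k)\,e^{K}.
\]
Since $R_{\max}(t_k)\to 0$, this uniformly bounds $R_{\max}$ on $[t_k,\infty)$ and in fact shows $R_{\max}(t)\to 0$ as $t\to\infty$; combined with the bound on $[0,t_1]$ furnished by the longtime existence result of Section \ref{longtimeexistence}, we obtain a uniform bound on all of $[0,\infty)$. The delicate point is really the upgrade from ``subsequential smallness'' to the integrability $K<\infty$ hidden in the earlier proof; once this is in place, the $\log$-derivative trick handles the rest.
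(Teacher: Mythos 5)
Your proposal is correct and follows essentially the same route as the paper: the differential inequality $\frac{d}{dt}R_{\max}\leq R_{\max}\left(R_{\max}-r\right)$ integrated to an exponential bound, together with the uniform bound on $\int_0^{\infty}\left(R_{\max}-r\right)dt$ coming from the middle-parallel length argument (which the paper states explicitly in the proof of Theorem \ref{subconvergingtozero}). The only cosmetic difference is that you anchor the estimate at the times $t_k$ with $R_{\max}(t_k)\to 0$, whereas the paper simply anchors it at an arbitrary fixed time $\tau$, which already suffices for uniform boundedness.
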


\begin{proof}
The maximum of the scalar curvature satisfies
\[
\frac{d}{dt}R_{\max}\leq R_{\max}\left(R_{\max}-r\right), 
\]
and hence, given any $\tau$, we have that
\begin{equation}
\label{maximumbounded}
R_{\max}\left(t\right)\leq R_{\max}\left(\tau\right)e^{\int_{\tau}^t R_{\max}\left(z\right)-r\left(z\right)\,dz},
\end{equation}
and we know that $\int_{\tau}^t R_{\max}\left(z\right)-r\left(z\right)\,dz$ is uniformly bounded, so the result follows.
\end{proof}

From the previous two results we can finally conclude:
\begin{corollary}
In the normalised flow $R\left(t\right)\rightarrow 0$ as $t\rightarrow \infty$.
\end{corollary}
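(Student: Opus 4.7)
The plan is to upgrade the subconvergence from Theorem \ref{subconvergingtozero} to full convergence by exploiting inequality (\ref{maximumbounded}) together with the uniform bound on $\int_0^\infty (R_{\max}-r)\,dz$ that was established in the proof of Theorem \ref{uniformlybounded}. Since $R\geq 0$ is preserved, $R_{\max}\geq R_{\min}\geq 0$, so it suffices to show $R_{\max}(t)\to 0$ as $t\to\infty$ (the minimum already tends to zero by the remark following Theorem \ref{thmnormalised}).

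First, I would record the key fact that $\int_0^\infty (R_{\max}(z)-r(z))\,dz<\infty$: the integrand is nonnegative (this was noted parenthetically in the proof of Theorem \ref{subconvergingtozero}, and also follows because the length of the middle parallel is decreasing yet bounded away from $0$), and the proof of Theorem \ref{uniformlybounded} shows its integral is uniformly bounded on $[0,t]$ for all $t$. Consequently, for any $\delta>0$ one can choose $\tau=\tau(\delta)$ so large that
\[
\int_{\tau}^\infty \bigl(R_{\max}(z)-r(z)\bigr)\,dz<\delta.
\]

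Given $\epsilon>0$, I would fix $\delta=\log 2$ and the corresponding $\tau$. By Theorem \ref{subconvergingtozero} there exists $t_k>\tau$ in the subconvergent sequence with $R_{\max}(t_k)<\epsilon/2$. Then, applying (\ref{maximumbounded}) with this $\tau$ replaced by $t_k$, for every $t\geq t_k$ one has
\[
R_{\max}(t)\leq R_{\max}(t_k)\,\exp\!\left(\int_{t_k}^t (R_{\max}-r)\,dz\right)\leq \frac{\epsilon}{2}\cdot 2=\epsilon.
\]
Hence $R_{\max}(t)\to 0$, and combined with $R\geq 0$ this gives $R(t)\to 0$ uniformly on $M$.

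The only subtle point is justifying that the integral $\int_0^\infty(R_{\max}-r)\,dz$ is finite, rather than merely bounded on each $[0,T]$; this comes from combining the monotone length relation $L(t)=L_0\exp\bigl(\int_0^t(r-R_{\max})\,d\tau\bigr)$ for the middle parallel with the lower bound on $L(t)$ used in Theorem \ref{subconvergingtozero}. Once that is in hand, the passage from subsequential to full convergence is a short bootstrap from (\ref{maximumbounded}), with no further estimates needed.
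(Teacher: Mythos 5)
Your proof is correct and follows essentially the same route as the paper: combine the subsequential convergence of Theorem \ref{subconvergingtozero} with the bound (\ref{maximumbounded}) and the uniform bound on $\int_0^\infty \left(R_{\max}-r\right)dz$. Your tail-smallness variant is if anything slightly cleaner than the paper's choice $R_{\max}\left(t_*\right)<\epsilon/A$, which strictly should read $\epsilon/e^{A}$ since (\ref{maximumbounded}) contributes the factor $e^{A}$.
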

\begin{proof}
Just observe that for any $\epsilon>0$, there is a $t_*$ such that $R_{\max}\left(t_*\right)<\epsilon/A$
(by Theorem \ref{subconvergingtozero}),
where $A$ is a bound on $\int_{0}^{\infty} R_{\max}-r\,dt$, and hence by (\ref{maximumbounded}),
$R_{\max}\left(t\right)<\epsilon$ for $t>t_*$.
\end{proof}

\end{document}